\numberwithin{equation}{section}
\numberwithin{figure}{section}
\theoremstyle{plain}
\newtheorem{thm}{\protect\theoremname}[section]
  \theoremstyle{remark}
  \newtheorem{rem}[thm]{\protect\remarkname}
  \theoremstyle{plain}
  \newtheorem{assumption}[thm]{\protect\assumptionname}
  \theoremstyle{plain}
  \newtheorem{prop}[thm]{\protect\propositionname}
  \theoremstyle{plain}
  \newtheorem{cor}[thm]{\protect\corollaryname}
  \theoremstyle{plain}
  \newtheorem{lem}[thm]{\protect\lemmaname}
  \providecommand{\assumptionname}{Assumption}
  \providecommand{\corollaryname}{Corollary}
  \providecommand{\lemmaname}{Lemma}
  \providecommand{\propositionname}{Proposition}
  \providecommand{\remarkname}{Remark}
\providecommand{\theoremname}{Theorem}
\begin{document}

\title{Scaling Limit of Two-component Interacting Brownian Motions}

\author{Insuk Seo}

\address{Department of Mathematics, University of California Berkeley, 970
Evans Hall, Berkeley, CA 94720-3840 USA}

\email{insuk@berkeley.edu }

\keywords{Interacting Brownian motions, two-component system, hydrodynamic
limit, strongly coupled parabolic systems, Maxwell--Stefan equation. }

\subjclass[2000]{82C22, 60F10, 35K55, 35Q72 }
\begin{abstract}
This paper presents our study of the asymptotic behavior of a two-component
system of Brownian motions undergoing certain form of singular interactions.
In particular, the system is a combination of two different types
of particles and the mechanical properties and the interaction parameters
depend on the corresponding type of particles. We prove that the hydrodynamic
limit of the empirical densities of two types is the solution of a
partial differential equation known as the Maxwell--Stefan equation. 
\end{abstract}

\maketitle

\section{Introduction\label{s1}}

In this article, we are primarily concerned with the large-scale analysis
of the locally interacting Brownian motions (LIBM), which consists
of two different types of particles. Originally, the one-component
LIBM consisting of identical particles was introduced by \cite{G1,G2},
in which the limit theorem for the tagged particle in non-equilibrium
dynamics and the propagation of chaos was established. Recently, in
\cite{S} the author of the current paper developed a large deviation
principle for the bulk average of particle trajectories of the one-component
LIBM by analyzing the so-called two-color system. In the two-color
system, all particles have the same mechanical property but each particle
is painted by either black or white. The analysis of this system is
known to be a difficult task because of the so-called non-gradient
property, and accordingly, the limit theorem and the large deviation
principle for the two-color system are known only for three interacting
particle systems: the symmetric simple exclusion process (SSEP) \cite{Q},
the zero-range process (ZRP) \cite{GJL} and the LIBM \cite{S}. 

Furthermore, we can also consider the two-component system, instead
of the two-color system, in which particles of different colors have
different mechanical properties. Few results are announced for the
two-component SSEP with additional interaction mechanisms. For instance,
a two-component SSEP where two types of particles affect each other
through their jump rates is studied in \cite{FGQ}. A two-component
SSEP under the presence of simultaneous births, deaths, and switching
of different types of particles is investigated in \cite{Sas}. Recently,
a two-component weakly asymmetric exclusion process where the type
of each particle is randomly updated according to the types of its
neighboring particles is analyzed in \cite{E}. However, the hydrodynamic
limit of the two-component SSEP without additional interaction mechanism
other than the exclusion dynamics is an open problem. This kind of
result is obtained for the ZRP. In \cite{GJL2}, the hydrodynamic
limit for the two-component ZRP without additional interaction mechanism
other than zero-range dynamics is obtained. In this paper, we develop
the second result of this kind for the two-component LIBM. We also
provide extensive discussions for general features and technical difficulties
for two-component interacting particle systems.

We also remark here that another two-component system under the recent
attention is the chain of harmonic oscillators \cite{KOS, Sim}. For
these models, two conserved quantities are the energy and the volume
of the system.

\subsection{Outline}

In Section \ref{s2}, we introduce a precise definition of the particle
system and state our main results. We also discuss the main feature
of the model, which enables us to obtain the hydrodynamic limit although
the system is non-gradient. In Section \ref{s3}, we establish the
hydrodynamic limit of the system, which amounts to the main result.
The hydrodynamic limit equation is a system of parabolic equations
\eqref{em1} with the the explicit cross-diffusion matrix $D$ (cf.
(\ref{em2})), where the diffusion matrix $D$ is not symmetric, and
$\frac{1}{2}(D+D^{\dagger})$ is not even positive-definite. For this
equation, the uniqueness of the weak solution is known to be a delicate
problem, and the general theory \cite{A} only provides the local
uniqueness. Consequently, the hydrodynamic limit result for the general
initial condition is local in time. The global result is achieved
only for the initial condition that is sufficiently close to the equilibrium.
This finding is a common feature of multi-component interacting particle
systems, including the ZRP, due to the result of \cite{GJL2}. We
discuss these uniqueness issues in Section \ref{s4}. We finally remark
here that the hydrodynamic limit equation \eqref{em1} for the two-component
LIBM is the so-called \textit{Maxwell--Stefan equation,} which describes
multi-component gaseous mixtures and is explained in Section \ref{s43}.

\section{Model and Main Result\label{s2}}

\subsection{\label{s21}Type of particles}

Consider a system of $N$ interacting particles $x_{1}^{N}(\cdot),\,\cdots,\,x_{N}^{N}(\cdot)$
on $\mathbb{T}$ where each particle belongs to one of the two given
types, namely, type $1$ and type $2$. We denote by $\mathcal{T}_{c}^{N}\subset\{1,\,2,\,\cdots,\,N\},$
the index sets of type $c\in\{1,\,2\}$. Specifically, the set of
particles of type $c\in\{1,\,2\}$ is $\{x_{i}^{N}(t):i\in\mathcal{T}_{c}^{N}\}$.
The diffusivity of the particle of type $c$ is $\sigma_{c}^{2}>0$
and our primary interest is the non-homogeneous case: $\sigma_{1}^{2}\neq\sigma_{2}^{2}$.
If $\sigma_{1}^{2}=\sigma_{2}^{2}$, the system becomes the one-component,
two-color system considered in \cite{S}. 

In addition, the interaction mechanism also depends on the type of
particles. In the local interaction model, two particles always reflect
each other regardless of their types when they collide, but they sometimes
change their labels. To explain this switching mechanism more precisely,
we first measure the amount of collision between two particles $x_{i}^{N}(\cdot)$
and $x_{j}^{N}(\cdot)$ up to time $t$ by local times $A_{ij}^{N}(t)$
and $A_{ji}^{N}(t)$, depending on their relative positions infinitesimally
before the collision:
\begin{align}
A_{ij}^{N}(t) & =\lim_{\epsilon\rightarrow0}\int_{0}^{t}\frac{\mathds{1}_{[0,\epsilon]}(x_{i}^{N}(s)-x_{j}^{N}(s))}{2\epsilon}ds\;,\label{loc1}\\
A_{ji}^{N}(t) & =\lim_{\epsilon\rightarrow0}\int_{0}^{t}\frac{\mathds{1}_{[-\epsilon,0]}(x_{i}^{N}(s)-x_{j}^{N}(s))}{2\epsilon}ds\;,\label{loc}
\end{align}
where the limit exists almost surely, e.g., see \cite[Chapter 2]{IM}.
In the one-component model \cite{G1,S}, the label switching between
two particles $x_{i}^{N}(\cdot)$ and $x_{j}^{N}(\cdot)$ occurs as
a Poisson process with constant intensity $\lambda N$ along this
local time clock. The main difference for the two-component system
is the dependence of the interaction parameter $\lambda$ on the types
of particles involved. Let $\lambda_{c_{1},c_{2}}\ge0$, $c_{1},\,c_{2}\in\{1,\,2\}$,
be four (possibly different) constants. Then, the label switching
between $x_{i}^{N}(\cdot)$ of type $c_{1}$ and $x_{j}^{N}(\cdot)$
of type $c_{2}$, occurs as the Poisson process with intensity $\lambda_{c_{1},c_{2}}N$
along the local time $A_{ij}^{N}(t)$ and with intensity $\lambda_{c_{2},c_{1}}N$
along the local time $A_{ji}^{N}(t)$. The rigorous definition of
the model described above will be given in the next subsection. 
\begin{rem}
Our primary interest in the current paper is the symmetric case, i.e.,
$\lambda_{1,2}=\lambda_{2,1}$, which shall be explicitly remarked
in Section \ref{s22}. On the other hand, our construction of the
process presented in Section \ref{s212} is valid without this constraint. 
\end{rem}

\subsection{Rigorous formulation of two-component LIBM\label{s212}}

The rigorous construction of the model described above can be carried
out in a similar manner to the one-component system \cite{G1,S}.
Let $G_{N}$ be the $N$-manifold
\begin{equation}
G_{N}=\{\mathbf{x}=(x_{1},\,x_{2},\,\cdots,\,x_{N})\in\mathbb{T}^{N}:x_{i}\neq x_{j}\,\,\text{for all }i\neq j\}\;,\label{e1}
\end{equation}
then the LIBM is constructed as a diffusion process on $G_{N}$ with
reflecting boundary condition. We first characterize the boundary
$\partial G_{N}$ of $G_{N}$. Any point $\mathbf{x}\in\partial G_{N}$
must satisfy $x_{i}=x_{j}$ for some $i\neq j$. However the face
$\{\mathbf{x}:x_{i}=x_{j}\}$, $i\neq j$, has two sides $F_{ij}$
and $F_{ji}$ corresponding to the boundary of two disconnected sets
$U_{ij}=\{\mathbf{x}\in G_{N}:x_{i}<x_{j}\}$ and $U_{ji}=\{\mathbf{x}\in G_{N}:x_{j}<x_{i}\}$,
respectively. Specifically, $F_{ij}$, $i\neq j$, can be regarded
as the equivalence class on sequences $(\mathbf{x}_{n})_{n=1}^{\infty}\mbox{ in }U_{ij}$
which converges to some point $\mathbf{x}$ satisfying $x_{i}=x_{j}$.
The equivalent class $\sim$ is defined by $(\mathbf{x}_{n})_{n=1}^{\infty}\sim(\widetilde{\mathbf{x}}_{n})_{n=1}^{\infty}$
if $\lim\mathbf{x}_{n}=\lim\widetilde{\mathbf{x}}_{n}$. Then, the
boundary $\partial G_{N}$ can be written as $\partial G_{N}=\bigcup_{i\neq j}F_{ij}$.
Denote by $\overline{\mathcal{C}}(G_{N})$ the set of smooth functions
on $G_{N}$ that are smooth up to the boundary $\partial G_{N}$.
For $1\le i\le N$, denote by $c(i)$ the type of particle $x_{i}^{N}(\cdot)$.
For $f\in\overline{\mathcal{C}}(G_{N})$ and $\mathbf{x}\in F_{ij}$,
define
\begin{align}
f_{ij}(\mathbf{x}) & =\lim_{\mathbf{x}_{n}\rightarrow\mathbf{x},\,\mathbf{x}_{n}\in U_{ij},\,\forall n}f(\mathbf{x}_{n})\;,\label{e11}\\
(D_{ij}f)(\mathbf{x}) & =\lim_{\mathbf{x}_{n}\rightarrow\mathbf{x},\,\mathbf{x}_{n}\in U_{ij},\,\forall n}(\sigma_{c(i)}^{2}\partial_{i}-\sigma_{c(j)}^{2}\partial_{j})f(\mathbf{x}_{n})\;,\label{e12}
\end{align}
so that $f_{ij}$ and $D_{ij}f$ are functions on $F_{ij}$. 

The LIBM is a diffusion process $\mathbf{x}^{N}(\cdot)=(x_{1}^{N}(\cdot),\,\cdots,\,x_{N}^{N}(\cdot))$
on $G_{N}$ with the pregenerator 
\begin{equation}
\mathscr{L}_{N}f=\frac{\sigma_{1}^{2}}{2}\sum_{i\in\mathcal{T}_{1}^{N}}\partial_{i}^{2}f+\frac{\sigma_{2}^{2}}{2}\sum_{j\in\mathcal{T}_{2}^{N}}\partial_{j}^{2}f=\frac{1}{2}\sum_{i=1}^{N}\sigma_{c(i)}^{2}\partial_{i}^{2}f\;,\label{gen}
\end{equation}
where the domain $\mathcal{D}(\mathscr{L}_{N})$ of $\mathscr{L}_{N}$
consists of functions $f\in\overline{\mathcal{C}}(G_{N})$ satisfying
the boundary condition $(\mathscr{B}_{ij}f)(\mathbf{x})=0$ on $F_{ij}$
for all $i\neq j$, where $\mathscr{B}_{ij}f$ is a function on $F_{ij}$
defined by 
\begin{equation}
\mathscr{\mathscr{B}}_{ij}f=D_{ij}f-\lambda_{c(i),c(j)}N(f_{ij}-f_{ji})\;.\label{e2}
\end{equation}
In \eqref{e2}, the first term corresponds to the reflection between
two particles $x_{i}^{N}(\cdot)$ and $x_{j}^{N}(\cdot)$, while the
second term explains the switching of labels between the two particles.
The Lebesgue measure $d\mathbf{x}$ on $G_{N}$ is the invariant measure
for $\mathscr{L}_{N}$, and the process $\mathbf{x}^{N}(\cdot)$ is
reversible with respect to $d\mathbf{x}$. 

Alternative construction of the particle system can be achieved by
the martingale formulation of the diffusion processes reflecting at
the boundary \cite{IW}, as noted in \cite{G1}. More precisely, for
the fixed final time $T>0$, the diffusion process $\mathbf{x}^{N}(\cdot)$
on $G_{N}$ that we constructed above can be regarded as a probability
measure $\mathbb{P}_{N}$ on $C([0,T],G_{N})$. Then there exist local
times $A_{ij}^{N}(t)$, $1\le i\neq j\le N$, such that, for all $f\in\overline{\mathcal{C}}(G_{N})$,
\begin{equation}
f(\mathbf{x}^{N}(t))-f(\mathbf{x}^{N}(0))-\int_{0}^{t}\sum_{i=1}^{N}\frac{\sigma_{c(i)}^{2}}{2}(\partial_{i}^{2}f)(\mathbf{x}^{N}(s))ds-\sum_{i\neq j}\int_{0}^{t}(\mathscr{\mathscr{B}}_{ij}f)(\mathbf{x}^{N}(s))dA_{ij}^{N}(s)\label{e3}
\end{equation}
is a $(\mathbb{P}_{N},\{\mathscr{F}_{t}:0\le t\le T\})$ martingale
where $\mathscr{F}_{t}=\sigma(\mathbf{x}^{N}(s):0\le s\le t)$. The
martingale \eqref{e3} admits the following alternative expression:
\begin{equation}
\sum_{i=1}^{N}\sigma_{c(i)}\int_{0}^{t}(\partial_{i}f)(\mathbf{x}^{N}(s))d\beta_{i}(s)+\sum_{i\neq j}\int_{0}^{t}(f_{ij}-f_{ji})(\mathbf{x}^{N}(s))\left[dM_{ij}^{N}(s)-dM_{ji}^{N}(s)\right]\;,\label{e4}
\end{equation}
where $\{\beta_{i}(\cdot):1\le i\le N\}$ is a family of independent
Brownian motions and $\{M_{ij}^{N}(\cdot):1\le i\neq j\le N\}$ is
a family of pairwise orthogonal compensated Poisson jump processes
where the rate of process $M_{ij}^{N}(t)$ is $\lambda_{c(i),c(j)}NA_{ij}^{N}(t)$
for all $i\neq j$. We denote the expectation with respect to $\mathbb{P}_{N}$
as $\mathbb{E}_{N}$.
\begin{rem}
\label{r21}The particle system consisting of $m$ different types
$1,\,2,\,\cdots,\,m$ can be defined similarly. 
\end{rem}

\subsection{\label{s22}Main result }

The empirical density at time $t$ of particles of type $c\in\{1,2\}$
is defined by 
\[
\mu_{c}^{N}(t)=\frac{1}{N}\sum_{i\in\mathcal{T}_{c}^{N}}\delta_{x_{i}^{N}(t)}\in\mathscr{M}(\mathbb{T})\;,
\]
and then the empirical density at time $t$ of the entire system can
be written as 
\[
\widetilde{\mu}^{N}(t)=\left(\mu_{1}^{N}(t),\,\mu_{2}^{N}(t)\right)^{\dagger}\in\mathscr{M}(\mathbb{T})^{2}\;.
\]
Fix the final time $T$ and let $\mathbb{Q}_{N}$ be the measure on
$C([0,T],\mathscr{M}(\mathbb{T})^{2})$ induced by the process $\widetilde{\mu}^{N}(\cdot)$.
Then, our goal is to characterize the limit point of $\{\mathbb{Q}_{N}\}_{N=1}^{\infty}$
as a Dirac measure on the unique solution of a certain partial differential
equation (PDE). To specify the initial condition of the PDE, we assume
that the initial empirical density $\widetilde{\mu}^{N}(0)$ satisfies
the law of large number in the sense that 
\begin{equation}
\widetilde{\mu}^{N}(0)\rightharpoonup\widetilde{\rho}^{\,0}(x)dx=\left(\rho_{1}^{0}(x)dx,\,\rho_{2}^{0}(x)dx\right)^{\dagger}\label{e7}
\end{equation}
weakly in $\ensuremath{\ensuremath{\mathscr{M}}(\mathbb{T})^{2}}$
for some non-negative initial density functions $\rho_{1}^{0}$ and
$\rho_{2}^{0}$. By pairing this weak convergence with the constant
function $1$, we derive $\left|\mathcal{T}_{c}^{N}\right|=N(\overline{\rho}_{c}+o_{N}(1))$
where $\overline{\rho}_{c}=\int_{\mathbb{T}}\rho_{c}^{0}(x)dx$ is
the average density of type $c$. 

We explain several technical assumptions to obtain the result in a
concrete form. The standard Sobolev space on $\mathbb{T}$ is denoted
by $W_{k,p}(\mathbb{T})$, and let $H_{k}(\mathbb{T})=W_{k,2}(\mathbb{T})$.
The following assumptions are required in the investigation of the
uniqueness result in Section \ref{s4}. The results in Section \ref{s3}
are valid without these assumptions.
\begin{assumption}
\noindent \label{a1} The function $\widetilde{\rho}^{\,0}(\cdot)$
belongs to $W^{1,p}(\mathbb{T})^{2}$ for some $p>2$. 
\end{assumption}

\begin{assumption}
\label{a2}The function $\widetilde{\rho}^{\,0}(\cdot)$ belongs to
$H_{2}(\mathbb{T})^{2}$, and satisfies
\[
\left\Vert \rho_{1}^{0}(\cdot)-\overline{\rho}_{1}\right\Vert _{H_{2}(\mathbb{T})}+\left\Vert \rho_{2}^{0}(\cdot)-\overline{\rho}_{2}\right\Vert _{H_{2}(\mathbb{T})}<\epsilon
\]
for some sufficiently small constant $\epsilon=\epsilon(\lambda,\sigma_{1},\sigma_{2})>0$.
This constant is explained at the end of Section \ref{s43}.
\end{assumption}
In addition, we also assume that the initial particle configuration
satisfies the following entropy condition, \cite{GPV,V2}. 
\begin{assumption}
\noindent \label{a3}The distribution of initial configuration $\mathbf{x}^{N}(0)=(x_{1}^{N}(0),\cdots,x_{N}^{N}(0))$
is absolutely continuous with respect to the Lebesgue measure $d\mathbf{x}$,
and the probability density function $f_{N}^{0}(\cdot)$ of $\mathbf{x}^{N}(0)$
satisfies 
\begin{equation}
\int_{G_{N}}f_{N}^{0}(\mathbf{x})\log f_{N}^{0}(\mathbf{x})d\mathbf{x}\le CN\;,\;\;\forall N\in\mathbb{N}\;,\label{e8}
\end{equation}
for some constant $C>0$ independent of $N$.
\end{assumption}
This assumption enables us to control the entropy production in Section
\ref{s311} and is required in the proof of the tightness of $\{\mathbb{Q}_{N}\}_{N=1}^{\infty}$
in Section \ref{s3}. Remark that the $i.i.d.$ configuration satisfies
Assumption \ref{a3}.

In this paper, we are interested in the symmetric case, i.e., $\lambda_{1,2}=\lambda_{2,1}$
only. Thus, for the sake of convenience, we shall normalize 
\begin{equation}
\lambda_{1,2}=\lambda_{2,1}=\lambda\sigma_{1}^{2}\sigma_{2}^{2}\;.\label{epa1}
\end{equation}
In particular, we are concerned with the asymptotic behavior of the
empirical density $\widetilde{\mu}^{N}(\cdot)$, which is not affected
by the switching of the labels between particles of the same type.
Hence, the interactions among the particles of same type certainly
do not affect $\widetilde{\mu}^{N}(\cdot)$ and in turn, neither the
assumptions nor the conclusion are influenced even if we assume that
the self-interaction parameters $\lambda_{1,1}$ and $\lambda_{2,2}$
satisfy
\begin{equation}
\lambda_{1,1}=\lambda\sigma_{1}^{4},\,\,\lambda_{2,2}=\lambda\sigma_{2}^{4}\;.\label{e31}
\end{equation}
We emphasize that all the results obtained in this article are remaining
in force for systems with any interacting parameters, $\lambda_{1,1},\,\lambda_{2,2}>0$.
Finally, we can write the interaction parameter between $x_{i}^{N}(\cdot)$
and $x_{j}^{N}(\cdot)$ by $\lambda\sigma_{c(i)}^{2}\sigma_{c(j)}^{2}$,
under the notations \eqref{epa1} and \eqref{e31}.

The following theorem is the main result. 
\begin{thm}
\label{t1} Under Assumption \ref{a3}, the family of probability
measures $\{\mathbb{Q}_{N}\}_{N=1}^{\infty}$ is tight and any of
its limit point is concentrated on the trajectory of the form $(\rho_{1}(\cdot,x)dx,\,\rho_{2}(\cdot,x)dx)^{\dagger}$,
where $(\rho_{1},\rho_{2})^{\dagger}$ is a weak solution of the equation
\begin{equation}
\frac{\partial}{\partial t}\begin{pmatrix}\rho_{1}\\
\rho_{2}
\end{pmatrix}=\frac{1}{2}\nabla\cdot\bigg[D(\rho_{1},\rho_{2})\nabla\begin{pmatrix}\rho_{1}\\
\rho_{2}
\end{pmatrix}\bigg]\;;\;\;t\in[0,T]\label{em1}
\end{equation}
with initial condition $\widetilde{\rho}^{\,0}(\cdot)=(\rho_{1}^{0}(\cdot),\rho_{2}^{0}(\cdot))^{\dagger}$,
and the cross-diffusion matrix $D(\cdot,\cdot)$ is given by 
\begin{equation}
D(\rho_{1},\rho_{2})=\frac{1}{\lambda+\frac{\rho_{1}}{\sigma_{1}^{2}}+\frac{\rho_{2}}{\sigma_{2}^{2}}}\begin{pmatrix}\rho_{1}+\lambda\sigma_{1}^{2} & \rho_{1}\\
\rho_{2} & \rho_{2}+\lambda\sigma_{2}^{2}
\end{pmatrix}\;.\label{em2}
\end{equation}
Furthermore, 
\begin{enumerate}
\item (Local result) Under Assumption \ref{a1}, there exists $t^{+}(\widetilde{\rho}^{\,0})>0$
such that equation (\ref{em1}) has a unique weak solution in $[0,t^{+}(\widetilde{\rho}^{\,0}))$.
Therefore, $\{\mathbb{Q}_{N}\}_{N=1}^{\infty}$ converges weakly to
the Dirac mass concentrated on this unique solution if $T\le t^{+}(\widetilde{\rho}^{\,0})$. 
\item (Global result) Under Assumption \ref{a2}, the weak solution of equation
(\ref{em1}) is unique for any $T>0$. Hence, $\{\mathbb{Q}_{N}\}_{N=1}^{\infty}$
converges weakly to the Dirac mass concentrated on this unique solution. 
\end{enumerate}
\end{thm}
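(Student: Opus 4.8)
The plan is to follow the entropy method of \cite{GPV}, modified to handle a collision current that is not of gradient type. The starting point is the pair of a priori bounds furnished by Assumption \ref{a3}: the relative entropy of the law of $\mathbf{x}^{N}(t)$ with respect to the invariant Lebesgue measure stays of order $N$, uniformly on $[0,T]$, and an integrated entropy-production (Dirichlet form) bound holds. These estimates power every local-averaging step below, and, combined with the martingale representation \eqref{e3}--\eqref{e4}, they yield tightness of $\{\mathbb{Q}_{N}\}$ via the Aldous--Rebolledo criterion once the drift and quadratic-variation terms isolated below are shown to be suitably bounded.

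To characterize the limit points I would test the empirical density of type $c$ against $G\in C^{\infty}(\mathbb{T})$ by applying \eqref{e3} to $f(\mathbf{x})=\frac{1}{N}\sum_{i\in\mathcal{T}_{c}^{N}}G(x_{i})$. Because $f$ is continuous across each face $F_{ij}$, the switching contribution $\lambda_{c(i),c(j)}N(f_{ij}-f_{ji})$ inside $\mathscr{B}_{ij}f$ vanishes identically, and the same cancellation kills the jump part of the martingale \eqref{e4}. The decomposition therefore reduces to the diffusion term $\tfrac{\sigma_{c}^{2}}{2}\int_{0}^{t}\langle\mu_{c}^{N}(s),G''\rangle\,ds$, the reflection-current term $\sum_{i\neq j}\int_{0}^{t}(D_{ij}f)(\mathbf{x}^{N}(s))\,dA_{ij}^{N}(s)$, and a martingale whose quadratic variation is $O(N^{-1})$ and hence negligible in $L^{2}(\mathbb{Q}_{N})$. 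At a collision $x_{i}=x_{j}=y$ the integrand of the reflection current equals $\tfrac{1}{N}\sigma_{c}^{2}G'(y)$ times the signed indicator that precisely one of the two colliding labels is of type $c$, so this term is exactly the net flux of type-$c$ mass carried past $y$ through collisions with particles of the other type.

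The main obstacle is to show that this reflection current, together with the diffusion term, reproduces the full macroscopic flux $-\tfrac12\int_{0}^{t}\langle G',[D(\rho_{1},\rho_{2})\nabla\widetilde{\rho}]_{c}\rangle\,ds$; this is the genuinely non-gradient step. Here I would exploit the special structure advertised in Section \ref{s22}: since all particles reflect identically and only spatially adjacent particles collide, the color field performs a stirring-type evolution along the collision local-time clock, and the product form of the rates fixed by \eqref{epa1}--\eqref{e31} makes the associated instantaneous current closable in closed form. Concretely I would (i) use the entropy and Dirichlet-form bounds to run the one-block and two-block replacement estimates, substituting the singular collision local time together with the local type configuration by smooth functions of the block-averaged densities; and (ii) carry out the fluctuation--dissipation computation in the product local-equilibrium states, whose explicit algebra yields precisely the cross-diffusion matrix $D$ of \eqref{em2}. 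Controlling the local times $A_{ij}^{N}$ uniformly and justifying their replacement by density functionals is the most delicate analytic point, since these are singular functionals of the trajectories (cf. \eqref{loc1}--\eqref{loc}).

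Combining these steps shows that every limit point is supported on weak solutions of \eqref{em1} with the prescribed initial datum, which is the first assertion. For the two refinements I would pass to the PDE analysis of Section \ref{s4}. The local statement follows by invoking Amann's theory \cite{A} for strongly coupled quasilinear parabolic systems: under Assumption \ref{a1} equation \eqref{em1} has a unique strong solution on a maximal interval $[0,t^{+}(\widetilde{\rho}^{\,0}))$, and a regularity bootstrap identifies it with the weak solution constructed above, upgrading tightness to convergence whenever $T\le t^{+}$. The global statement under Assumption \ref{a2} is obtained by linearizing \eqref{em1} around the constant equilibrium $(\overline{\rho}_{1},\overline{\rho}_{2})$: although $\tfrac12(D+D^{\dagger})$ need not be positive-definite, the linearization at the equilibrium is, so a smallness hypothesis in $H_{2}(\mathbb{T})^{2}$ propagates an a priori bound that rules out blow-up and forces uniqueness for every $T>0$. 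I expect the non-gradient identification of $D$ in step (ii) to be the central difficulty, with the uniqueness arguments, though delicate, resting on the available parabolic-systems machinery.
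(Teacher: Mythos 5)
Your reduction is accurate up to the point where you isolate the reflection current: testing the type-$c$ empirical density against $G$ does kill the switching terms, and what remains is the singular current $\sum_{i\neq j}\int_0^t (D_{ij}f)\,dA_{ij}^N(s)$, which is exactly the non-gradient object \eqref{cur}. The genuine gap is your step (ii). Closing this antisymmetric current by a fluctuation--dissipation computation in product local-equilibrium states is precisely Varadhan's non-gradient program, and the paper states explicitly (end of Section \ref{s22}) that this program is not available here: at the time of writing it could not be applied to non-gradient interacting \emph{diffusions} (the two-color system of \cite{V1} is cited as an open problem), and applying it to a current supported on collision local times is a singular, unresolved issue. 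Moreover, even where the non-gradient method does work, it yields the diffusion matrix only through a variational formula, not the closed form \eqref{em2}; asserting that ``the product form of the rates makes the associated instantaneous current closable in closed form'' is the conclusion you need, not an argument for it.

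The idea you are missing is the paper's auxiliary martingale. The function $r_{N,k}(\mathbf{x})=x_k+\frac{\alpha}{N}\sum_{i\neq k}\sigma_{c(i)}^{-2}\nu(x_i-x_k)$ of \eqref{e301} is harmonic and satisfies every oblique boundary condition $\mathscr{B}_{ij}r_{N,k}\equiv 0$; it is an explicit solution of the relevant Poisson equation. Hence $z_k^N(t)=r_{N,k}(\mathbf{x}^N(t))$ is a martingale \eqref{e310}: in the $z$-coordinates the problematic drift current vanishes identically, and the interaction survives only through the quadratic variation \eqref{e312}, which involves the \emph{symmetric} combinations $A_{ij}^N+A_{ji}^N$. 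Replacing these by local density functionals (Theorem \ref{t2}) is a gradient-type replacement and is the only one the paper ever needs; your route, by contrast, requires replacing the signed differences $A_{ij}^N-A_{ji}^N$, which is exactly what cannot be done. The paper then applies It\^{o}'s formula to the density field of $\{z_i^N\}$, uses the identity \eqref{e319} for $\rho_1/\sigma_1^2+\rho_2/\sigma_2^2$ (Lemma \ref{lem1}), and undoes the coordinate change via the inverse $G(t,\cdot)$ of $F(t,\cdot)$ to land on \eqref{em1}--\eqref{em2}. Your uniqueness discussion is broadly aligned with the paper (Amann's theory for the local statement; for the global statement the paper identifies \eqref{em1} with a ternary Maxwell--Stefan system and cites Kawashima's near-equilibrium theorem rather than performing a direct linearization), but without the martingale construction the core of the hydrodynamic-limit proof does not go through.
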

We now briefly explain the main feature of the model which enables
us to compute the hydrodynamic limit with the explicit diffusion coefficient
\eqref{em2}. We first review the model with $\sigma_{1}=\sigma_{2}=1$,
which is the one-component, two-color system considered in \cite{S}.
In general, the two-color interacting particle system is non-gradient
in the sense of \cite{V2}, and this property makes the analysis of
hydrodynamic limit a complicated project. The robust non-gradient
method has been developed by Varadhan \cite{V2}, and its application
to the two-color SSEP was achieved by Quastel \cite{Q}. For the detailed
discussion on the non-gradient method, we refer to \cite[Chapter 7]{KLO}.

The one-component LIBM is verified to be a non-gradient system. Define
$\left\langle \mu,f\right\rangle =\int_{\mathbb{T}}fd\mu$. Then,
the so-called density field for type $c\in\{1,\,2\}$ corresponding
to the function $g\in C^{1}(\mathbb{T})$ is defined by $\left\langle \mu_{c}^{N}(t),g\right\rangle $.
Then, by \eqref{e3}, we obtain that 
\[
\left\langle \mu_{c}^{N}(t),g\right\rangle -\left\langle \mu_{c}^{N}(0),g\right\rangle =\frac{1}{N}\sum_{i\in\mathcal{T}_{1}^{N}}\int_{0}^{t}g'(x_{i}^{N}(s))\cdot N\sum_{k:k\neq i}\left[dA_{ik}^{N}(s)-dA_{ki}^{N}(s)\right]+M_{t}
\]
where $M_{t}$ is a martingale. Hence, in view of \eqref{loc1} and \eqref{loc},
the current around the particle $x_{i}^{N}(s)$ can be formally defined
by 
\begin{equation}
g'(x_{i}^{N}(s))\cdot N\sum_{k:k\neq i}\left[\delta_{+}(x_{k}^{N}(s)-x_{i}^{N}(s))-\delta_{-}(x_{i}^{N}(s)-x_{k}^{N}(s))\right]\label{cur}
\end{equation}
where $\delta_{+}$ and $\delta_{-}$ are right and left Dirac functions
at $0$, respectively. The LIMB can be easily observed as a non-gradient
system in this expression. The application of the non-gradient method
to the singular object, such as \eqref{cur}, became a technically
difficult issue. Furthermore, at the time when this paper was written,
the non-gradient method could not be applied to the non-gradient,
diffusion-type interacting particle system. For instance, the analysis
of the two-color system of interacting diffusions considered in \cite{V1}
is an open problem. However, the LIBM owns a particular feature, which
enables us to derive the hydrodynamic limit without appealing the
non-gradient method. This feature is briefly explained below. 

In \cite{KV}, Kipnis and Varadhan reduced the investigation of the
tagged particle in a reversible interacting particle system to a central
limit theorem for certain reversible Markov chains. We refer to \cite[Chapter 6]{KLO}
for the detailed exposition of this topic. This central limit theorem
is obvious if there exists a solution of a certain Poisson equation
of the form $-\mathcal{L}f=V$, e.g., see \cite[Chapter 1]{KLO} for
details. The general methodology of \cite{KV} deals with the situation
for which this Poisson equation does not have a solution. The main
feature of the one-component LIBM is the explicit, simple solution
in the corresponding Poisson equation. Hence, the argument of \cite{KV}
is not required. Given this feature, we can simplify the computation
considerably and compute the diffusion coefficient in an explicit
form. Furthermore, this feature is also useful in the examination
of the non-equilibrium tagged particle in \cite{G1}. More precisely,
in \cite[Definition 8]{G1} a martingale $z_{1}^{N}(t)$ is introduced
and is constructed by using the explicit solution of the Poisson equation.
The martingale $z_{1}^{N}(t)$ allows \cite{G1} and \cite{S} to detour
the non-gradient method. Although our model is two-component, we introduce
another form of martingale \eqref{e310} in the computation of hydrodynamic
limits. This martingale enables us to compute the explicit diffusion
coefficient \eqref{em2}.

\section{Hydrodynamic Limit\label{s3}}

In this section, we prove Theorem \ref{t1}. The uniqueness issue
pertaining to equation \eqref{em1} is discussed in the next section.

\subsection{Green's formula for $G_{N}$}

We briefly review Green's formula for $G_{N}$. Although the current
paper is self-contained, we refer to \cite[Section 2.2]{S} for detailed
explanation. 

Fix a function $u\in\overline{\mathcal{C}}(G_{N})$ and a smooth vector
field $\mathbf{V}$ on $G_{N}$. Recall that the boundary $\partial G_{N}$
can be expressed as $\bigcup_{i\neq j}F_{ij}$. Note that the normal
vector to $F_{ij}$ is $(\mathbf{e}_{i}-\mathbf{e}_{j})/\sqrt{2}$
where $\mathbf{e}_{i}$ represents the $i$th standard normal vector.
The Lebesgue measure on the surface $F_{ij}=\{\mathbf{\mathbf{x}}:x_{i}=x_{j}\}$,
which is denoted by $dS_{ij}(\mathbf{x})$, is normalized to have
a total measure of $1$. Then, Green's formula for $G_{N}$ implies
that
\begin{equation}
\int_{G_{N}}\nabla u(\mathbf{x})\cdot\mathbf{V}(\mathbf{x})d\mathbf{x}=-\int_{G_{N}}u(\mathbf{x})(\nabla\cdot\mathbf{V})(\mathbf{x})d\mathbf{x}+\sum_{i\neq j}\int_{F_{ij}}u_{ij}(\mathbf{x})\left[\mathbf{V}(\mathbf{x})\cdot(\mathbf{e}_{i}-\mathbf{e}_{j})\right]dS_{ij}(\mathbf{x})\;.\label{gr}
\end{equation}
Note that factor $1/\sqrt{2}$ disappeared because of the renormalization
of the measure $dS_{ij}$ on $F_{ij}$. This formula will be used
frequently hereafter.

\subsection{Entropy production\label{s311}}

Since the invariant measure for $\mathscr{L}_{N}$ is $d\mathbf{x},$
the corresponding Dirichlet form is defined by $\mathscr{D}_{N}(f)=\int_{G_{N}}f(\mathbf{x})(-\mathscr{L}_{N}f)(\mathbf{x})d\mathbf{x}$
for $f\in\overline{\mathcal{C}}(G_{N})$. For non-negative function
$f\in\overline{\mathcal{C}}(G_{N})$ define $\mathcal{D}_{N}(f)=\mathscr{D}_{N}(\sqrt{f})$.
Then, it is easy to verify that
\begin{equation}
\mathcal{D}_{N}(f)=\sum_{i=1}^{N}\int_{G_{N}}\frac{\sigma_{c(i)}^{2}}{8}\frac{(\partial_{i}f)^{2}}{f}d\mathbf{x}+\sum_{i\neq j}\frac{\lambda N\sigma_{c(i)}^{2}\sigma_{c(j)}^{2}}{2}\int_{F_{ij}}\left(\sqrt{f_{ij}}-\sqrt{f_{ji}}\right)^{2}dS_{ij}(\mathbf{x})\;.\label{e33}
\end{equation}
Denote by $f_{N}(t,\:\cdot)$ the marginal density of $\mathbf{x}^{N}(\cdot)$
at time $t$ with respect to $d\mathbf{x}$. Then, the entropy at
time $t$ is defined by
\[
H_{N}(t)=\int_{G_{N}}f_{N}(t,\mathbf{x})\log f_{N}(t,\mathbf{x})d\mathbf{x}\;.
\]

\begin{prop}
\label{p33}We have that $\frac{d}{dt}H_{N}(t)\le-4\mathcal{D}_{N}(f_{N}(t,\cdot))$.\end{prop}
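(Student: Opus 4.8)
The plan is to run the standard relative-entropy production estimate, using reversibility to identify the evolution of $f_{N}(t,\cdot)$ and then Green's formula \eqref{gr} to reproduce, term by term, the two pieces of the Dirichlet form \eqref{e33}.

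First I would exploit that $d\mathbf{x}$ is invariant and that $\mathbf{x}^{N}(\cdot)$ is reversible, so that $\mathscr{L}_{N}$ is self-adjoint in $L^{2}(d\mathbf{x})$ and the marginal density solves the forward equation $\partial_{t}f_{N}=\mathscr{L}_{N}f_{N}$ with $f_{N}(t,\cdot)\in\mathcal{D}(\mathscr{L}_{N})$; in particular $f_{N}(t,\cdot)$ obeys the boundary condition $\mathscr{B}_{ij}f_{N}=0$ on each $F_{ij}$. Differentiating the entropy and using conservation of mass, $\int_{G_{N}}\mathscr{L}_{N}f_{N}\,d\mathbf{x}=\frac{d}{dt}\int_{G_{N}}f_{N}\,d\mathbf{x}=0$, the constant term drops out and I obtain $\frac{d}{dt}H_{N}(t)=\int_{G_{N}}(\mathscr{L}_{N}f_{N})\log f_{N}\,d\mathbf{x}$.

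Next I would evaluate this integral via Green's formula \eqref{gr}, taking $u=\log f_{N}$ and the vector field $\mathbf{W}$ with components $\frac{1}{2}\sigma_{c(k)}^{2}\partial_{k}f_{N}$, so that $\nabla\cdot\mathbf{W}=\mathscr{L}_{N}f_{N}$ and $\mathbf{W}\cdot(\mathbf{e}_{i}-\mathbf{e}_{j})=\frac{1}{2}D_{ij}f_{N}$ on $F_{ij}$. The bulk term becomes $-\frac{1}{2}\sum_{k}\sigma_{c(k)}^{2}\int\frac{(\partial_{k}f_{N})^{2}}{f_{N}}d\mathbf{x}$, which by the identity $\frac{(\partial_{k}f)^{2}}{f}=4(\partial_{k}\sqrt{f})^{2}$ is exactly $-4$ times the first sum in \eqref{e33}. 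On the boundary I would substitute the boundary condition $D_{ij}f_{N}=\lambda N\sigma_{c(i)}^{2}\sigma_{c(j)}^{2}((f_{N})_{ij}-(f_{N})_{ji})$ and symmetrize the $(i,j)\leftrightarrow(j,i)$ contributions, producing integrals of $((f_{N})_{ij}-(f_{N})_{ji})(\log (f_{N})_{ij}-\log (f_{N})_{ji})$ over $F_{ij}$. The elementary convexity inequality $(a-b)(\log a-\log b)\ge 4(\sqrt{a}-\sqrt{b})^{2}$ for $a,b>0$ (which reduces, after setting $r=\sqrt{a/b}$, to $\log r\ge\frac{2(r-1)}{r+1}$ for $r\ge 1$) then bounds the boundary contribution by $-4$ times the second sum in \eqref{e33}. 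Adding the two pieces yields $\frac{d}{dt}H_{N}(t)\le-4\mathcal{D}_{N}(f_{N}(t,\cdot))$.

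The main obstacle is rigor rather than the formal computation: I must justify differentiating under the integral sign and performing the integration by parts on the singular manifold $G_{N}$, where $f_{N}$ can vanish and $\log f_{N}$ blow up near $\partial G_{N}$. The standard remedy is to carry out the argument for a regularized density $f_{N}+\delta$ (or a mollification/truncation), prove the inequality there, and pass to the limit $\delta\to 0$ by monotone/dominated convergence, using parabolic regularity of $f_{N}$ in the interior and the finiteness of $\mathcal{D}_{N}(f_{N}(t,\cdot))$ supplied by Assumption \ref{a3}. I would also need to keep careful track of the normalization of the surface measure $dS_{ij}$ and of the coefficient in \eqref{e2}, since it is precisely this bookkeeping that makes the boundary constant align with \eqref{e33} and deliver the clean factor $4$.
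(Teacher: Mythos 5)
Your proposal is correct and follows essentially the same route as the paper's proof: both identify $\partial_{t}f_{N}=\mathscr{L}_{N}f_{N}$ together with the boundary condition $\mathscr{B}_{ij}f_{N}\equiv0$, apply Green's formula \eqref{gr} with $u=\log f_{N}$ and the (weighted) gradient of $f_{N}$ as the vector field, substitute the boundary condition to produce the symmetric term $\left((f_{N})_{ij}-(f_{N})_{ji}\right)\left(\log(f_{N})_{ij}-\log(f_{N})_{ji}\right)$, and conclude with the elementary inequality $(a-b)(\log a-\log b)\ge4(\sqrt{a}-\sqrt{b})^{2}$. Your closing paragraph on regularizing $f_{N}$ and justifying the integration by parts only adds rigor to what the paper carries out as a formal computation, not a different argument.
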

\begin{proof}
Since the function $f_{N}$ solves the equation $\partial_{t}f_{N}=(1/2)\Delta f_{N}$
on $G_{N}$, and satisfies the boundary condition $\mathfrak{\mathscr{B}}_{ij}f_{N}\equiv0$
on $F_{ij}$ for all $i\neq j$, we obtain 
\begin{equation}
\frac{d}{dt}H_{N}(t)=\int_{G_{N}}\partial_{t}f_{N}(t,\mathbf{x})\log f_{N}(t,\mathbf{x})d\mathbf{x}=\int_{G_{N}}\Delta f_{N}(t,\mathbf{x})\log f_{N}(t,\mathbf{x})d\mathbf{x}\;.\label{hr}
\end{equation}
Hence, by \eqref{gr} with $u(\mathbf{x})=\log f_{N}(t,\mathbf{x})$
and $\mathbf{V}(\mathbf{x})=\nabla f_{N}(t,\mathbf{x})$, 
\begin{eqnarray*}
\frac{d}{dt}H_{N}(t) & = & -\sum_{i=1}^{N}\int_{G_{N}}\frac{\sigma_{c(i)}^{2}}{2}\frac{|\partial_{i}f_{N}(t,\mathbf{x})|^{2}}{f_{N}(t,\mathbf{x})}d\mathbf{x}\\
 &  & -\sum_{i\neq j}\frac{\lambda N\sigma_{c(i)}^{2}\sigma_{c(j)}^{2}}{2}\int_{F_{ij}}\left((f_{N})_{ij}-(f_{N})_{ji}\right)\left(\log(f_{N})_{ij}-\log(f_{N})_{ji}\right)(t,\mathbf{x})dS_{ij}(\mathbf{x})\;.
\end{eqnarray*}
The proof is completed by the elementary inequality $(a-b)(\log a-\log b)\ge4(\sqrt{a}-\sqrt{b})^{2}$.
\end{proof}
For $t_{1}<t_{2}$, define $\overline{f}_{N}^{[t_{1},t_{2}]}(\mathbf{x})=(t_{2}-t_{1})^{-1}\int_{t_{1}}^{t_{2}}f_{N}(t,\mathbf{x})dt$,
and denote $\overline{f}_{N}^{[0,T]}$ simply by $\overline{f}_{N}$.
\begin{cor}
\label{c34}For all $0\le t_{1}<t_{2}\le T$, we have that $\mathcal{D}_{N}(\overline{f}_{N}^{[t_{1},t_{2}]})\le CN/(t_{2}-t_{1})$
where the constant $C$ does not depend on $t_{1},\,t_{2}$ and $N$.\end{cor}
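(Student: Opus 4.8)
The plan is to integrate the entropy production estimate of Proposition~\ref{p33} over the interval $[t_1,t_2]$ and then to exploit the convexity of the functional $f\mapsto\mathcal{D}_N(f)$ together with the \textit{a priori} entropy bound of Assumption~\ref{a3}.

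First I would integrate the inequality $\frac{d}{dt}H_N(t)\le-4\mathcal{D}_N(f_N(t,\cdot))$ from $t_1$ to $t_2$ to obtain
\begin{equation*}
4\int_{t_1}^{t_2}\mathcal{D}_N(f_N(t,\cdot))\,dt\le H_N(t_1)-H_N(t_2)\;.
\end{equation*}
It then remains to control the right-hand side by $O(N)$, uniformly in $t_1,t_2,N$. Since $\mathcal{D}_N\ge0$, Proposition~\ref{p33} shows that $t\mapsto H_N(t)$ is non-increasing, so $H_N(t_1)\le H_N(0)\le CN$ by Assumption~\ref{a3}. For the remaining term, I would observe that $f_N(t_2,\cdot)$ is a probability density with respect to the finite Lebesgue measure on $G_N$, so Jensen's inequality applied to the convex function $x\mapsto x\log x$ gives $H_N(t_2)\ge -CN$ (indeed $H_N(t_2)\ge0$ when $G_N$ carries a probability measure). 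Hence $H_N(t_1)-H_N(t_2)\le CN$, and therefore $\int_{t_1}^{t_2}\mathcal{D}_N(f_N(t,\cdot))\,dt\le CN$.

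The crucial remaining ingredient is the convexity of the map $f\mapsto\mathcal{D}_N(f)$. Inspecting the explicit expression \eqref{e33}, the bulk term is a sum of integrals of $(\partial_i f)^2/f$; since both $f$ and $\partial_i f$ are linear in $f$ and the perspective map $(u,v)\mapsto v^2/u$ is jointly convex on $\{u>0\}\times\mathbb{R}$, each such integrand is convex in $f$. The boundary term is a sum of integrals of $(\sqrt{f_{ij}}-\sqrt{f_{ji}})^2=f_{ij}+f_{ji}-2\sqrt{f_{ij}f_{ji}}$; as the geometric mean $(a,b)\mapsto\sqrt{ab}$ is concave and $f_{ij},f_{ji}$ are linear in $f$, this integrand is likewise convex in $f$. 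Consequently $\mathcal{D}_N$ is a convex functional.

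Finally, since $\overline{f}_N^{[t_1,t_2]}=(t_2-t_1)^{-1}\int_{t_1}^{t_2}f_N(t,\cdot)\,dt$ is an average of the densities $f_N(t,\cdot)$, Jensen's inequality for the convex functional $\mathcal{D}_N$ yields
\begin{equation*}
\mathcal{D}_N(\overline{f}_N^{[t_1,t_2]})\le\frac{1}{t_2-t_1}\int_{t_1}^{t_2}\mathcal{D}_N(f_N(t,\cdot))\,dt\;.
\end{equation*}
Combining this with the integral bound established above gives $\mathcal{D}_N(\overline{f}_N^{[t_1,t_2]})\le CN/(t_2-t_1)$, as claimed. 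I expect the only genuinely delicate point to be the convexity of $\mathcal{D}_N$; once that is secured, the remaining arguments are a routine integration of the entropy inequality together with the entropy bound of Assumption~\ref{a3}.
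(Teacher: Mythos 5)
Your proposal is correct and follows essentially the same route as the paper: convexity of $\mathcal{D}_N$ plus Jensen's inequality to pass to the time average, then the integrated entropy-production bound of Proposition \ref{p33} combined with monotonicity of $H_N$, non-negativity of the entropy (since $d\mathbf{x}$ is a probability measure on $G_N$), and Assumption \ref{a3}. The only difference is that you supply justifications the paper leaves implicit — the joint convexity of the perspective map $(u,v)\mapsto v^2/u$ and the concavity of the geometric mean for the convexity of $\mathcal{D}_N$, and the Jensen argument for $H_N(t_2)\ge 0$ — both of which are correct.
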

\begin{proof}
By the convexity of the Dirichlet form,
\[
\mathcal{D}_{N}(\overline{f}_{N}^{[t_{1},t_{2}]})\le\frac{1}{t_{2}-t_{1}}\int_{t_{1}}^{t_{2}}\mathcal{D}_{N}(f_{N}(t,\cdot))dt\;.
\]
By Proposition \ref{p33} and Assumption \ref{a3}, the right hand
side of the above inequality is bounded above by
\[
-\frac{1}{4(t_{2}-t_{1})}\int_{t_{1}}^{t_{2}}\frac{d}{dt}H_{N}(t)dt=\frac{H_{N}(t_{1})-H_{N}(t_{2})}{4(t_{2}-t_{1})}\le\frac{H_{N}(0)}{4(t_{2}-t_{1})}\le\frac{CN}{t_{2}-t_{1}}\;.
\]

\end{proof}

\subsection{\label{s312}Tightness}

Denote by $\mathbb{P}_{N}^{eq}$ the equilibrium process starting
from the invariant measure $d\mathbf{x}$. We recall the following
Dirichlet form for the one-component system \cite{S} with the interaction
parameter $\lambda>0$:
\begin{equation}
\widetilde{\mathcal{D}}_{N}(f)=\frac{1}{8}\sum_{i=1}^{N}\int_{G_{N}}\frac{(\partial_{i}f)^{2}}{f}d\mathbf{x}+\frac{\lambda N}{2}\sum_{i\neq j}\int_{F_{ij}}\left(\sqrt{f_{ij}}-\sqrt{f_{ji}}\right)^{2}dS_{ij}(\mathbf{x})\;.\label{e36}
\end{equation}
Note that this Dirichlet form is equivalent to that in our study in
the sense that 
\begin{equation}
C_{1}\mathcal{D}_{N}(f)\le\widetilde{\mathcal{D}}_{N}(f)\le C_{2}\mathcal{D}_{N}(f)\label{e35}
\end{equation}
where $C_{1}>0$ and $C_{2}>0$ are constants that depend only on
$\sigma_{1}$ and $\sigma_{2}$. In \cite[Proposition 3.4]{S}, the
exponential tightness of the one-component LIBM in equilibrium has
been developed, and the proof therein is entirely based on the estimates
in terms of the Dirichlet form $\widetilde{\mathcal{D}}_{N}(\cdot)$.
Accordingly, all the arguments are still valid for our model through
equivalence (\ref{e35}). In this manner, we obtain the following
tightness result for the equilibrium processes.
\begin{prop}
\label{p1}For all $\epsilon,\,\alpha>0$, it holds
\[
\limsup_{\delta\rightarrow0}\limsup_{N\rightarrow\infty}\frac{1}{N}\log\mathbb{P}_{N}^{eq}\bigg[\bigg|\bigg\{ i:\sup_{\substack{0\le s,\,t\le T\\
|s-t|\le\delta
}
}\left|x_{i}^{N}(t)-x_{i}^{N}(s)\right|\ge\epsilon\bigg\}\bigg|\ge N\alpha\bigg]=-\infty\;.
\]

\end{prop}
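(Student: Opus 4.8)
The plan is to reduce the claim to its one-component counterpart, \cite[Proposition 3.4]{S}, by exploiting the two-sided comparison \eqref{e35} between the Dirichlet forms $\mathcal{D}_N$ and $\widetilde{\mathcal{D}}_N$. The starting observation is that every quantitative estimate entering the proof of \cite[Proposition 3.4]{S} is phrased in terms of the Dirichlet form $\widetilde{\mathcal{D}}_N(\cdot)$; since \eqref{e35} lets one replace $\widetilde{\mathcal{D}}_N$ by $\mathcal{D}_N$ throughout at the cost of the $N$-independent constants $C_1,C_2$, each such estimate survives for the two-component model after adjusting constants that depend only on $\sigma_1,\sigma_2$. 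Because these constants do not scale with $N$, the $1/N$-exponential rate is unaffected and the double limit in $\delta$ and $N$ yields the same value $-\infty$. Thus the entire burden is to confirm that the argument of \cite{S} is genuinely insensitive to the two-component modifications---distinct diffusivities $\sigma_1^2\neq\sigma_2^2$ and the switching rates $\lambda_{c_1,c_2}$---which is exactly what \eqref{e35} encodes.

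To make the proposal self-contained I would reconstruct the underlying estimate as follows. Writing $N_{\delta}=\#\{i:\sup_{|s-t|\le\delta}|x_i^N(t)-x_i^N(s)|\ge\epsilon\}$ for the number of ``bad'' particles, an exponential Markov inequality gives, for every $\theta>0$,
\[
\frac{1}{N}\log\mathbb{P}_N^{eq}[N_\delta\ge N\alpha]\le-\theta\alpha+\frac{1}{N}\log\mathbb{E}_N^{eq}\big[e^{\theta N_\delta}\big].
\]
It therefore suffices to show $\limsup_{\delta\to0}\limsup_{N\to\infty}\frac1N\log\mathbb{E}_N^{eq}[e^{\theta N_\delta}]=0$ for each fixed $\theta$, after which letting $\theta\to\infty$ yields the claim. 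To estimate the exponential moment I would partition $[0,T]$ into $O(1/\delta)$ subintervals of length $\delta$ and bound the modulus of continuity on $[0,T]$ by the maximal displacement of each coordinate over a bounded number of adjacent subintervals, thereby replacing the path functional $N_\delta$ by instantaneous maximal-displacement functionals on which the Markov property can act. On a single subinterval, reversibility with respect to $d\mathbf{x}$ together with a Feynman--Kac / Rayleigh--Ritz bound reduces the relevant exponential moment to a variational problem of the form $\sup_f\{\langle V_\epsilon f,f\rangle-\mathcal{D}_N(f)\}$, where $V_\epsilon$ penalises the event that a given particle has moved by at least $\epsilon$; this is precisely the point at which the Dirichlet-form identity \eqref{e33} (equivalently the one-component form \eqref{e36}) is invoked.

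The heart of the matter, and the step I expect to be the main obstacle, is to show that the single-particle oscillation probability tends to $0$ as $\delta\to0$, uniformly enough to beat the combinatorial factor arising from the choice of which $N\alpha$ particles are bad. Here one uses the martingale representation \eqref{e4} applied to coordinate functions: for $f=x_i$ one has $f_{ij}-f_{ji}=0$ on every face, so each labeled trajectory $x_i^N(\cdot)$ is a continuous semimartingale whose martingale part is a Brownian motion of quadratic variation $\sigma_{c(i)}^2\,t$ and whose finite-variation part is a combination of collision local times. Its oscillation is therefore governed, modulo the interaction, by that of a Brownian motion of diffusivity at most $\max(\sigma_1^2,\sigma_2^2)$, while controlling the local-time contribution uniformly in $N$ on the exponential scale is the delicate point; this is exactly where the Dirichlet-form bound genuinely enters and where the equivalence \eqref{e35} must be used to import the corresponding estimate from \cite{S}. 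Once the single-interval exponential moments are shown to contribute $o_\delta(1)$ per subinterval on the $1/N$ scale, summing over the $O(1/\delta)$ subintervals and sending first $N\to\infty$, then $\delta\to0$, and finally $\theta\to\infty$ completes the argument.
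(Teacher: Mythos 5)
Your proposal takes essentially the same approach as the paper: the paper's entire proof of Proposition \ref{p1} consists of the observation that the argument for \cite[Proposition 3.4]{S} is phrased purely in terms of the Dirichlet form $\widetilde{\mathcal{D}}_{N}(\cdot)$, so the equivalence \eqref{e35} (with constants depending only on $\sigma_{1},\sigma_{2}$, hence harmless on the $1/N$ exponential scale) transfers every estimate to the two-component model. Your additional reconstruction of the one-component argument goes beyond what the paper records, but since you ultimately defer the delicate local-time control back to \cite{S}, the substance of your proof is the same reduction.
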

This super-exponential estimate for the equilibrium processes allows
us to develop the tightness of the non-equilibrium processes $\mathbb{P}_{N}$,
as follows.
\begin{cor}
\label{c36}For all $\epsilon,\,\alpha>0$,
\begin{equation}
\limsup_{\delta\rightarrow0}\limsup_{N\rightarrow\infty}\mathbb{P}_{N}\bigg[\bigg|\bigg\{ i:\sup_{\substack{0\le s,\,t\le T\\
|s-t|\le\delta
}
}\left|x_{i}^{N}(t)-x_{i}^{N}(s)\right|\ge\epsilon\bigg\}\bigg|\ge N\alpha\bigg]=0\;.\label{e37}
\end{equation}
In particular, $\{\mathbb{Q}_{N}\}_{N=1}^{\infty}$ is a tight family
in $C([0,T],\mathscr{M}(\mathbb{T})^{2})$.\end{cor}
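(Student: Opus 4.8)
The plan is to obtain the oscillation estimate \eqref{e37} for the non-equilibrium law $\mathbb{P}_N$ from the super-exponential bound of Proposition \ref{p1} for the equilibrium law $\mathbb{P}_N^{eq}$ by means of the relative entropy inequality, and then to read off tightness of $\{\mathbb{Q}_N\}_{N=1}^\infty$ from \eqref{e37} via the standard reduction of measure-valued tightness to the scalar density fields. Write $A_{N,\delta}$ for the event appearing inside \eqref{e37}.

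First I would carry out the entropy transfer. Since $\mathbb{P}_N$ and $\mathbb{P}_N^{eq}$ are generated by the same pregenerator $\mathscr{L}_N$ and differ only through their initial laws $f_N^0\,d\mathbf{x}$ and $d\mathbf{x}$, the Radon--Nikodym derivative of the two path measures is simply $f_N^0$ evaluated at the initial configuration, so the relative entropy of the path measures coincides with that of the initial laws, $H(\mathbb{P}_N\,|\,\mathbb{P}_N^{eq})=\int_{G_N}f_N^0\log f_N^0\,d\mathbf{x}=H_N(0)$, which is at most $CN$ by Assumption \ref{a3}. The entropy inequality $\mu(A)\le(\log 2+H(\mu\,|\,\nu))/\log(1+\nu(A)^{-1})$, applied with $\mu=\mathbb{P}_N$, $\nu=\mathbb{P}_N^{eq}$ and $A=A_{N,\delta}$, then gives
\[
\mathbb{P}_N(A_{N,\delta})\le\frac{\log 2+CN}{\log\!\big(1+\mathbb{P}_N^{eq}(A_{N,\delta})^{-1}\big)}\;.
\]
Now fix $K>0$. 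By Proposition \ref{p1} there is $\delta_0=\delta_0(K)$ so that $\mathbb{P}_N^{eq}(A_{N,\delta})\le e^{-KN}$ for every $\delta<\delta_0$ and all large $N$; the denominator is then at least $KN$, whence $\limsup_{N}\mathbb{P}_N(A_{N,\delta})\le C/K$ for all $\delta<\delta_0$. Letting $\delta\to0$ and then $K\to\infty$ produces \eqref{e37}.

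Next I would deduce tightness. The set of measures on $\mathbb{T}$ of total mass at most $1$ is compact in the weak topology, so the time marginals are automatically tight, and by the usual criterion it suffices to show that for each $c\in\{1,2\}$ and each $g$ in a countable dense subset of $C^1(\mathbb{T})$ the real-valued process $\langle\mu_c^N(\cdot),g\rangle$ is tight in $C([0,T],\mathbb{R})$. Its initial values are bounded by $\|g\|_\infty$, so only the modulus of continuity is at issue. Because the positions $x_i^N(\cdot)$ are continuous and $\mathcal{T}_c^N$ is a time-independent index set---the label switches merely exchange two coincident particles and leave every trajectory continuous---one has
\[
\sup_{|s-t|\le\delta}\big|\langle\mu_c^N(t),g\rangle-\langle\mu_c^N(s),g\rangle\big|\le\frac{\|g'\|_\infty}{N}\sum_{i=1}^{N}\sup_{|s-t|\le\delta}\big|x_i^N(t)-x_i^N(s)\big|\;.
\]
On the complement of $A_{N,\delta}$ the right-hand side is at most $\|g'\|_\infty(\epsilon+\alpha)$, so \eqref{e37} forces the modulus of continuity below any prescribed $\eta$ with probability tending to $1$ as $\delta\to0$ and $N\to\infty$; choosing $\epsilon,\alpha$ small yields the modulus estimate, hence tightness of each scalar field and thus of $\{\mathbb{Q}_N\}_{N=1}^\infty$ in $C([0,T],\mathscr{M}(\mathbb{T})^2)$.

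The hard part will be the entropy transfer of the previous paragraph: the clean identification $H(\mathbb{P}_N\,|\,\mathbb{P}_N^{eq})=H_N(0)\le CN$ together with the correct use of the entropy inequality, so that the super-exponential rate of Proposition \ref{p1} dominates the entropy cost that is only linear in $N$; the decay rate $K$ must be sent to infinity only after $N$. A secondary point deserving care is the justification of the oscillation bound for $\mu_c^N$ in the presence of label switching, namely that the configuration of positions carrying types can be realized with continuous trajectories and a fixed type assignment, so that \eqref{e37}---a statement about individual particle displacements---indeed controls the modulus of continuity of the color field even though that field may move diffusively across many particles.
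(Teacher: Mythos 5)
Your proposal is correct and follows essentially the same route as the paper: the relative entropy inequality (with $H(\mathbb{P}_N\,|\,\mathbb{P}_N^{eq})=H_N(0)\le CN$ from Assumption \ref{a3}) transfers the super-exponential equilibrium estimate of Proposition \ref{p1} to $\mathbb{P}_N$, exactly as in \eqref{e38}. Your tightness step is likewise the paper's argument made slightly more explicit: the paper reduces to the smallness of $\sup_{|s-t|\le\delta}\frac1N\sum_{i\in\mathcal{T}_c^N}|x_i^N(t)-x_i^N(s)|$ and bounds it by the counting event in \eqref{e37}, which is precisely your split of particles into those with small oscillation and the at most $N\alpha$ exceptional ones.
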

\begin{proof}
Denote by $E_{N,\delta,\epsilon,\alpha}$ the event inside the bracket
of \eqref{e37}. By \cite[Proposition 8.2 of Appendix 1]{KL} and
by Assumption \ref{a3},
\begin{equation}
\mathbb{P}_{N}[E_{N,\delta,\epsilon,\alpha}]\le\frac{2+H[\mathbb{P}_{N}|\mathbb{P}_{N}^{eq}]}{\log\left(1+\mathbb{P}_{N}^{eq}[E_{N,\delta,\epsilon,\alpha}]^{-1}\right)}\le\frac{2+CN}{-\log\mathbb{P}_{N}^{eq}[E_{N,\delta,\epsilon,\alpha}]}\;.\label{e38}
\end{equation}
Hence, \eqref{e37} is a consequence of Proposition \ref{p1}.

We next prove the tightness of $\{\mathbb{Q}_{N}\}_{N=1}^{\infty}$.
It suffices to demonstrate that, for $\epsilon>0$ and $c\in\{1,\,2\}$,
\[
\lim_{\delta\rightarrow0}\limsup_{N\rightarrow\infty}\mathbb{P}_{N}\bigg[\sup_{\substack{0\le s,t\le T,\,|s-t|\le\delta}
}\frac{1}{N}\sum_{i\in\mathcal{T}_{c}^{N}}\left|x_{i}^{N}(t)-x_{i}^{N}(s)\right|\ge\epsilon\bigg]=0\;.
\]
This estimate follows from (\ref{e37}), since the last probability
is bounded above by
\[
\mathbb{P}_{N}\bigg[\bigg|\bigg\{ i:\sup_{\substack{0\le s,t\le T,\,|s-t|\le\delta}
}\left|x_{i}^{N}(t)-x_{i}^{N}(s)\right|\ge\epsilon/2\bigg\}\bigg|\ge N\epsilon/2\bigg]\;.
\]

\end{proof}

\subsection{Energy estimate}

We now establish an energy estimate. Let $\rho=\rho_{1}+\rho_{2}$. 
\begin{prop}
\label{p38}Suppose that $\mathbb{Q}_{\infty}$ is a weak limit of
$\{\mathbb{Q}_{N}\}_{N=1}^{\infty}$. Then, $\mathbb{Q}_{\infty}$
is concentrated on the trajectory of the form \textup{$(\rho_{1}(\cdot,x)dx,\,\rho_{2}(\cdot,x)dx)^{\dagger}$}
for some $\rho_{1},\,\rho_{2}\in L^{2}([0,T]\times\mathbb{T})$ which
are weakly differentiable in $x$, and satisfy
\begin{equation}
\int_{0}^{T}\int_{\mathbb{T}}\frac{\rho_{x}^{2}(t,x)}{\rho(t,x)}dxdt<\infty\;.\label{e315}
\end{equation}
\end{prop}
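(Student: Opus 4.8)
The plan is to deduce the energy bound \eqref{e315} from a variational characterization together with the Dirichlet form estimate of Corollary \ref{c34}. Write $\mu^N_t=\mu_1^N(t)+\mu_2^N(t)$ for the total empirical density and recall the elementary identity, valid for every nonnegative $\rho$,
\[
\int_0^T\!\!\int_{\mathbb T}\frac{\rho_x^2}{\rho}\,dx\,dt=\sup_{H}\Big\{-2\int_0^T\!\!\int_{\mathbb T}\rho\,\partial_xH\,dx\,dt-\int_0^T\!\!\int_{\mathbb T}\rho\,H^2\,dx\,dt\Big\}\,,
\]
the supremum being over $H\in C^{1}([0,T]\times\mathbb T)$ and attained pointwise at $H=\rho_x/\rho$. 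For fixed $H$, each bracketed term is the integral of a bounded continuous function against the total density, so the functional $\ell_H$ appearing inside the supremum is bounded and continuous on $C([0,T],\mathscr M(\mathbb T)^2)$. Since the supremum can be realized along a countable family $\{H_k\}$ dense in $C^{1}$, it suffices to control $\ell_{H_k}$ under $\mathbb Q_N$ and then pass to the limit.

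The core is the uniform bound $\mathbb E_{\mathbb Q_N}[\ell_H]\le C$, with $C$ independent of $N$ and $H$. At each fixed time I would write $\langle\mu^N_t,\partial_xH\rangle=\frac1N\sum_i\partial_i[H(x_i)]$ and integrate by parts against $f_N(t,\cdot)$ using Green's formula \eqref{gr} with the vector field $\mathbf V=\sum_iH(x_i)\mathbf e_i$. The decisive point is that on each face $F_{ij}$ one has $x_i=x_j$, so the boundary contribution carries the factor $H(x_i)-H(x_j)=0$ and cancels identically; this is exactly why the clean estimate holds for the \emph{total} density and singles it out among all linear combinations of the two species. The surviving volume term is $-\tfrac2N\sum_i\int H(x_i)\sqrt{f_N}\,\partial_i\sqrt{f_N}\,d\mathbf x$, and Young's inequality $4ab\le a^2+4b^2$ produces the quadratic functional $\langle\mu^N,H^2\rangle$ together with $\tfrac4N\sum_i\int(\partial_i\sqrt{f_N})^2\,d\mathbf x$. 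The latter is dominated by $\sigma_{\min}^{-2}$ times the diffusive part of the Dirichlet form \eqref{e33}, and hence, after integrating in time, by Proposition \ref{p33} and Assumption \ref{a3} (equivalently, by Corollary \ref{c34}) it is bounded by a constant. This yields $\mathbb E_{\mathbb Q_N}[\ell_H]\le C$ uniformly.

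To transfer the bound to $\mathbb Q_\infty$ I would consider the finite maxima $\Psi_m=\max_{k\le m}\ell_{H_k}$, which are bounded and continuous, so $\mathbb E_{\mathbb Q_\infty}[\Psi_m]=\lim_N\mathbb E_{\mathbb Q_N}[\Psi_m]$ along the converging subsequence. The uniform bound $\mathbb E_{\mathbb Q_N}[\Psi_m]\le C$ is obtained from the entropy inequality relative to $\mathbb P_N^{eq}$ (using $H[\mathbb P_N\mid\mathbb P_N^{eq}]\le CN$ from Assumption \ref{a3}) combined with the elementary bound $e^{N\Psi_m}\le\sum_{k\le m}e^{N\ell_{H_k}}$ and a Feynman--Kac/Rayleigh estimate of $\tfrac1N\log\mathbb E_{\mathbb P_N^{eq}}[e^{N\ell_{H_k}}]$ in terms of the Dirichlet form, which is uniform in $k$ and absorbs the harmless factor $\tfrac1N\log m$; this is the standard energy-estimate lemma, cf.\ \cite[Chapter 5]{KL}. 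Letting first $N\to\infty$ and then $m\to\infty$, monotone convergence gives $\mathbb E_{\mathbb Q_\infty}[\sup_k\ell_{H_k}]<\infty$, and the variational identity forces $\mathbb Q_\infty$-a.e.\ trajectory to have $\rho=\rho_1+\rho_2$ absolutely continuous with weak derivative satisfying \eqref{e315}. Since $\rho_1,\rho_2\ge0$ and their sum is absolutely continuous, each $\rho_c$ is absolutely continuous, and $0\le\rho_c\le\rho$ with $\rho\in L^2$ gives $\rho_c\in L^2([0,T]\times\mathbb T)$.

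The main obstacle is the weak differentiability of the individual densities $\rho_1,\rho_2$, which does not follow from the total-density estimate. Repeating the integration by parts with $\mathbf V=\sum_{i\in\mathcal T_c^N}H(x_i)\mathbf e_i$ no longer annihilates the boundary terms: on a face $F_{ij}$ joining particles of different types the factor becomes $H(x_i)(\mathds{1}_{i\in\mathcal T_c^N}-\mathds{1}_{j\in\mathcal T_c^N})\neq0$, leaving a contribution supported on the cross-type collision faces. I expect the crux of the argument to be the control of these boundary contributions through the switching part of the Dirichlet form \eqref{e33}, namely the terms $\tfrac{\lambda N\sigma_{c(i)}^2\sigma_{c(j)}^2}{2}\int_{F_{ij}}(\sqrt{f_{ij}}-\sqrt{f_{ji}})^2\,dS_{ij}$, together with a uniform bound on the total collision local time; this is precisely where the two-component interaction enters and where the estimate departs from the one-component case.
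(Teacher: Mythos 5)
Your core argument is essentially the paper's own proof. The paper establishes exactly your key estimate (its \eqref{e316}) by the same Green's-formula integration by parts, in which the boundary terms on every face $F_{ij}$ cancel because the test object is built from the \emph{total} particle configuration and hence takes equal values on the two sides of $\{x_i=x_j\}$ irrespective of the particles' types; it then applies Cauchy--Schwarz (your Young's inequality) and the entropy-production bound of Corollary \ref{c34}. Your transfer to $\mathbb{Q}_\infty$ via a countable dense family, finite maxima, the entropy inequality and Feynman--Kac is precisely the ``standard'' machinery the paper invokes by citing \cite[Section 5.7]{KL}, which you have simply unpacked; likewise your unproved assertion $\rho\in L^2$ is covered in the paper by the citation of \cite[Lemma 4.3]{S} (the energy bound \eqref{e315} plus conservation of mass gives $\rho\in L^2$). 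The only place you diverge is your final paragraph: you flag weak differentiability of the \emph{individual} densities $\rho_1,\rho_2$ as the main unresolved obstacle and sketch an attack through the switching part of the Dirichlet form \eqref{e33} and the cross-type collision faces. You should know that the paper's proof does not address this point at all: it derives absolute continuity, weak differentiability and \eqref{e315} only for the sum $\rho=\rho_1+\rho_2$, and then concludes merely $\rho_1,\rho_2\in L^2$ from $0\le\rho_c\le\rho$, which (together with differentiability of $\rho$) is what the identification argument in the proof of Theorem \ref{t1} actually uses. So your proposal is no less complete than the paper's own proof; if anything, you have correctly identified that the clause ``weakly differentiable'' applied to the individual components, as literally stated in the proposition, is not justified by the argument given --- there, as in your write-up, only the derivative of the total density is controlled.
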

\begin{proof}
We claim first that all the conclusions of proposition follow from
the following estimate for all $\phi\in C^{0,1}([0,T]\times\mathbb{T})$:
\begin{equation}
\mathbb{E}_{N}\bigg[\int_{0}^{T}\frac{1}{N}\sum_{i=1}^{N}\phi'(x_{i}^{N}(t))dt\bigg]\le C\mathbb{E}_{N}\left[\int_{0}^{T}\frac{1}{N}\sum_{i=1}^{N}\phi^{2}(x_{i}^{N}(t))dt\right]^{1/2}\;.\label{e316}
\end{equation}
Indeed, it is standard, e.g., \cite[Section 5.7]{KL} that (\ref{e316})
implies that $\rho$ is not only absolutely continuous with respect
to the Lebesgue measure but also has a weak derivative $\rho_{x}$
that satisfies (\ref{e315}). By \cite[Lemma 4.3]{S} this energy
estimate implies $\rho\in L^{2}([0,T]\times\mathbb{T})$ and accordingly
$\rho_{1},\,\rho_{2}\in L^{2}([0,T]\times\mathbb{T})$.

We now prove \eqref{e316}. By \eqref{gr} with $u(\mathbf{x})=\phi(x_{1})+\cdots+\phi(x_{N})$
and $\mathbf{V}=(\overline{f}_{N},\cdots,\overline{f}_{N})$, the
left hand side of (\ref{e316}) becomes
\begin{equation}
\frac{T}{N}\int_{G_{N}}\sum_{i=1}^{N}\phi'(x_{i})\overline{f}_{N}(\mathbf{x})d\mathbf{x}=-\frac{T}{N}\int_{G_{N}}\sum_{i=1}^{N}\phi(x_{i})(\partial_{i}\overline{f}_{N})(\mathbf{x})d\mathbf{x}\;.\label{e317}
\end{equation}
By Cauchy-Schwarz's inequality, the right hand side of (\ref{e317})
is bounded above by
\begin{equation}
\frac{T}{N}\bigg[\int_{G_{N}}\sum_{i=1}^{N}\phi^{2}(x_{i})\overline{f}_{N}(\mathbf{x})d\mathbf{x}\int_{G_{N}}\sum_{i=1}^{N}\frac{(\partial_{i}\overline{f}_{N})^{2}(\mathbf{x})}{\overline{f}_{N}(\mathbf{x})}d\mathbf{x}\bigg]^{1/2}\;.\label{e318}
\end{equation}
Thus, (\ref{e316}) follows from Corollary \ref{c34}. 
\end{proof}

\subsection{\label{s313}Auxiliary martingales}

Recall the average density $\overline{\rho}_{c}=\int_{\mathbb{T}}\rho_{c}^{0}(x)dx$,
$c\in\{1,\,2\}$, and define a constant by
\[
\alpha=\bigg(\lambda+\frac{\overline{\rho}_{1}}{\sigma_{1}^{2}}+\frac{\overline{\rho}_{2}}{\sigma_{2}^{2}}\bigg)^{-1}\;.
\]
Define $\nu(x)=x$, $x\in[0,1)$ and regard $\nu(\cdot)$ as a discontinuous
function on $\mathbb{T}$. The process $z_{k}^{N}(t)$, $1\le k\le N$,
is defined by 
\begin{equation}
z_{k}^{N}(t)=x_{k}^{N}(t)+\frac{\alpha}{N}\sum_{i=1}^{N}\frac{1}{\sigma_{c(i)}^{2}}\nu(x_{i}^{N}(t)-x_{k}^{N}(t))\;.\label{e310}
\end{equation}
We claim that $z_{k}^{N}(t)-z_{k}^{N}(0)$ is a martingale with respect
to the filtration $\{\mathscr{F}_{t}:0\le t\le T\}$. To demonstrate
this, define a function $r_{N,k}:G_{N}\rightarrow\mathbb{R}$ by 
\begin{equation}
r_{N,k}(\mathbf{x})=x_{k}+\frac{\alpha}{N}\sum_{i:i\neq k}\frac{1}{\sigma_{c(i)}^{2}}\nu(x_{i}-x_{k})\;.\label{e301}
\end{equation}
Since $r_{N,k}$ is a linear function on $G_{N}$, it is obvious that
$\partial_{i}^{2}r_{N,k}\equiv0$ on $G_{N}$ for all $1\le i\le N$,
and it is also straightforward to check that $\mathscr{\mathscr{B}}_{ij}r_{N,k}\equiv0$
on $F_{ij}$ for all $1\le i\neq j\le N$. Hence, by (\ref{e3}),
$z_{k}^{N}(t)-z_{k}^{N}(0)=r_{N,k}(\mathbf{x}^{N}(t))-r_{N,k}(\mathbf{x}^{N}(0))$
is a martingale. Moreover, by (\ref{e4}), this martingale can be
written as
\begin{equation}
z_{k}^{N}(t)-z_{k}^{N}(0)=\alpha\lambda\sigma_{c(k)}\beta_{k}(t)+\frac{\alpha}{N}\sum_{i=1}^{N}\frac{1}{\sigma_{c(i)}}\beta_{i}(t)+\frac{\alpha}{N}\sum_{i:i\neq k}\frac{1}{\sigma_{c(i)}^{2}}\left[M_{ik}^{N}(t)-M_{ki}^{N}(t)\right]\;.\label{e311}
\end{equation}
For $1\le i\le N$ and $c\in\{1,2\}$, define the averaged local time
$A_{i,c}^{N}(t)$ by 
\begin{equation}
A_{i,c}^{N}(t)=\frac{1}{N}\sum_{j\in\mathcal{T}_{c}^{N}\setminus\{i\}}\left[A_{ij}^{N}(t)+A_{ji}^{N}(t)\right]\;.\label{e03}
\end{equation}
Then, the quadratic variation of the martingale $z_{k}^{N}(t)-z_{k}^{N}(0)$
can be written as 
\begin{equation}
\left\langle z_{k}^{N},z_{k}^{N}\right\rangle _{t}=\lambda\alpha^{2}\sigma_{c(k)}^{2}\bigg[\lambda t+\frac{1}{\sigma_{1}^{2}}A_{k,1}^{N}(t)+\frac{1}{\sigma_{2}^{2}}A_{k,2}^{N}(t)\bigg]+O(N^{-1})t\;.\label{e312}
\end{equation}

\subsection{\label{s314}Mollification of local times}

In the derivation of the hydrodynamic limit in the spirit of \cite{GPV,V2},
the evolution of the density field must be analyzed and the major
technical issue in this investigation is the approximation of the
interaction terms by a function of the empirical density. This step
is known as the replacement lemma, and we refer to \cite[Section 5]{KL}
for the detailed exposition. In the context of this work, this interaction
term is $J_{2}$ in \eqref{e321} and therefore, in view of \eqref{e312},
we must replace $A_{i,c}^{N}(t)$ with a function of the density field.
To this end, we introduce the \textit{local density} of particle configuration.
Fix $\epsilon>0$ and let $\iota_{\epsilon}=(2\epsilon)^{-1}\mathds{1}_{[-\epsilon,\epsilon]}$
be a function on $\mathbb{T}$. Then, for $\mathbf{x}\in G_{N}$,
we define the local density of type $c\in\{1,\,2\}$ around $x_{i}$,
$1\le i\le N$, by
\begin{equation}
\rho_{i,c}^{N,\epsilon}(\mathbf{x})=\frac{1}{N}\sum_{j\in\mathcal{T}_{c}^{N}}\iota_{\epsilon}(x_{j}-x_{i})\;.\label{e313}
\end{equation}
In view of \eqref{loc}, the local density $\rho_{i,c}^{N,\epsilon}(\mathbf{x})$
is the natural candidate for the required replacement, and the corresponding
approximation can be formally stated as follows. 
\begin{thm}
\label{t2}For all $c_{1},\,c_{2}\in\{1,\,2\}$, $\delta>0$, $0\le t_{1}<t_{2}\le T$,
and $h(\cdot,\,\cdot)\in C^{1}([0,T]\times\mathbb{T})$, we have that
\[
\limsup_{\epsilon\rightarrow0}\limsup_{N\rightarrow\infty}\mathbb{P}_{N}\bigg[\bigg|\frac{1}{N}\sum_{i\in\mathcal{T}_{c_{1}}^{N}}\int_{0}^{T}h(t,z_{i}^{N}(t))\left[dA_{i,c_{2}}^{N}(t)-\rho_{i,c_{2}}^{N,\epsilon}(\mathbf{x}^{N}(t))dt\right]\bigg|>\delta\bigg]=0\;.
\]

\end{thm}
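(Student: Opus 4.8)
The plan is to reduce the probability estimate to an expectation bound by Markov's inequality, and then to exhibit the difference $dA_{i,c_2}^N(t) - \rho_{i,c_2}^{N,\epsilon}(\mathbf{x}^N(t))\,dt$, integrated against the slowly varying weight $h(t,z_i^N(t))$, as the increment of an explicitly small functional plus a martingale with vanishing quadratic variation. The starting observation is that, by \eqref{loc1} and \eqref{loc}, the averaged local time $A_{i,c_2}^N$ of \eqref{e03} is nothing but the $\epsilon'\downarrow 0$ limit of the $\epsilon'$-window local density $\int_0^t \rho_{i,c_2}^{N,\epsilon'}(\mathbf{x}^N(s))\,ds$ built from $\iota_{\epsilon'}$ as in \eqref{e313}. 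Thus Theorem \ref{t2} is really a comparison between the density measured at the vanishing scale and at the fixed scale $\epsilon$, that is, a two-scale (``replacement'') estimate.

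To realize this comparison I would introduce, for each ordered pair of types $(c_1,c_2)$, a corrector $\Psi_\epsilon = \Psi_\epsilon^{(c_1,c_2)}$ on $\mathbb{T}$, even and supported in $\{|y|\le\epsilon\}$, solving $\tfrac{1}{2}(\sigma_{c_1}^2+\sigma_{c_2}^2)\Psi_\epsilon''(y) = -\iota_\epsilon(y)$ for $y\ne 0$ and carrying at the origin a kink whose size is tuned so that the induced co-normal jump equals the unit local-time weight. Since the total mass of $\iota_\epsilon$ is balanced by the kink, $\Psi_\epsilon$ is genuinely supported in $\{|y|\le\epsilon\}$ with $\|\Psi_\epsilon\|_\infty = O(\epsilon)$ and $\|\Psi_\epsilon'\|_\infty = O(1)$. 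I would then apply the time-dependent version of the martingale identity \eqref{e3}--\eqref{e4} to
\[
F_\epsilon(t,\mathbf{x}) = \frac{1}{N^2}\sum_{i\in\mathcal{T}_{c_1}^N}\sum_{j\in\mathcal{T}_{c_2}^N} h(t,z_i^N)\,\Psi_\epsilon(x_i-x_j).
\]
With $\Psi_\epsilon$ calibrated so that its bulk curvature reproduces $-\iota_\epsilon(x_i-x_j)$ and its kink reproduces, through the co-normal part $D_{ij}$ of $\mathscr{B}_{ij}$ in \eqref{e2}, the local-time weight, the non-martingale part of \eqref{e3} collapses, after reconstructing the sums in \eqref{e03} and \eqref{e313}, to precisely $\frac{1}{N}\sum_{i\in\mathcal{T}_{c_1}^N}\int_0^T h(t,z_i^N)\,[dA_{i,c_2}^N - \rho_{i,c_2}^{N,\epsilon}\,dt]$, up to $\partial_t F_\epsilon$, a martingale, and cross terms.

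It then remains to bound the three remainder pieces. The boundary increment $F_\epsilon(T,\cdot)-F_\epsilon(0,\cdot)$ and the term $\int_0^T \partial_t F_\epsilon\,dt$ are $O(\epsilon)$, because $\Psi_\epsilon = O(\epsilon)$ is supported on windows of width $\epsilon$, so that only $O(N\epsilon)$ pairs feed each inner sum; the relevant occupation quantity $\frac{1}{N^2}\sum_{i,j}\mathds{1}\{|x_i-x_j|\le\epsilon\}$, integrated in time, is controlled in expectation by the energy bound of Corollary \ref{c34}. The martingale's quadratic variation, read off from \eqref{e4}, is dominated by $\sum_k \sigma_{c(k)}^2\int_0^T(\partial_k F_\epsilon)^2\,dt$; since $\partial_k F_\epsilon$ inherits the factor $\Psi_\epsilon' = O(1)$ on an $O(N\epsilon)$-set together with the $N^{-2}$ prefactor, this is $O(\epsilon^2/N)$ and vanishes. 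The cross terms, produced by $\mathscr{L}_N$ acting simultaneously on $h(t,z_i^N)$ and on $\Psi_\epsilon(x_i-x_j)$, carry one derivative of $\Psi_\epsilon$ and are $O(\epsilon)$ by the same counting. Letting $N\to\infty$ and then $\epsilon\to0$, and invoking Markov's inequality, yields the claim.

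The hard part will be the error bookkeeping forced by the weight $h(t,z_i^N)$, not the main cancellation. Because $z_i^N$ in \eqref{e310} depends on \emph{every} particle through the discontinuous function $\nu$, the function $F_\epsilon$ fails to be continuous across the collision faces $F_{kl}$: crossing $F_{kl}$ makes $\nu(x_l-x_k)$ jump, so $z_k^N$, and hence $h(t,z_k^N)$, jumps by $O(1/N)$. In the switching part $-\lambda_{c(k),c(l)}N(f_{kl}-f_{lk})$ of $\mathscr{B}_{kl}$ this jump is multiplied by the dangerous factor $N$, so I must verify that such contributions, combined with the localization and the $O(\epsilon)$ smallness of $\Psi_\epsilon$, are still $O(\epsilon)$ or $o(1)$ after integration against the local times $dA_{kl}^N$; the total local-time mass required here is again furnished by Corollary \ref{c34}. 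Establishing that all these $z$-generated boundary and bulk corrections vanish in the iterated limit, while the designed main terms reconstruct $A_{i,c_2}^N$ and $\rho_{i,c_2}^{N,\epsilon}$ exactly, is the technical crux.
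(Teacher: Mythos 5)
Your corrector is, for a single reflecting pair, just Tanaka's formula, and it is closely related to what the paper actually does: the sawtooth $g_{\epsilon}$ in the proof of Lemma \ref{lem2} is (up to a constant) your $\Psi_{\epsilon}'$, but it is used in Green's formula \eqref{gr} against an \emph{arbitrary density} $f$, not in It\^o's formula along the trajectory. So your route is genuinely different: you argue pathwise under $\mathbb{P}_{N}$ (Markov's inequality plus expectation bounds), whereas the paper proves a super-exponential estimate under the equilibrium measure $\mathbb{P}_{N}^{eq}$ via Feynman--Kac and the largest-eigenvalue variational formula (Proposition \ref{t5}, resting on Lemmas \ref{lem37} and \ref{lem2}), transfers it to $\mathbb{P}_{N}$ by the entropy inequality \eqref{e38}, and only then handles the time dependence of $h$ by piecewise-constant approximation together with the mass bounds \eqref{ep1}--\eqref{ep2}. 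Your treatment of the bulk cross terms, the martingale quadratic variation, and $\partial_{t}F_{\epsilon}$ is fine as far as it goes.

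The gap is exactly at the step you defer, and it is fatal to the bookkeeping as proposed: the boundary terms at faces involving a \emph{third} particle. Fix a pair $i\in\mathcal{T}_{c_{1}}^{N}$, $j\in\mathcal{T}_{c_{2}}^{N}$ with $|x_{i}-x_{j}|\le\epsilon$ and a third particle $m$. Your $F_{\epsilon}$ feels the faces between $j$ and $m$ through the reflection part of \eqref{e2}, namely $D_{jm}\left[\Psi_{\epsilon}(x_{i}-x_{j})\right]=\pm\sigma_{c_{2}}^{2}\Psi_{\epsilon}'(x_{i}-x_{j})$, which is $O(1)$ (one derivative of $\Psi_{\epsilon}$), and it feels the faces between $i$ and $m$ through the switching part, which contributes $\lambda N\cdot O(1/N)\cdot O(\epsilon)$ per pair. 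Both families are integrated against local-time measures, and here the counting that saved you in the bulk breaks down: by \eqref{ep3} the total local-time mass $\sum_{k\neq l}A_{kl}^{N}(T)$ is of order $N^{2}$ (each particle collides with order $N$ others), not of order $N$ like the occupation measure, while the number of pairs feeding a given face is $M_{\epsilon,\cdot}\sim N\epsilon$. Multiplying, the reflection family is of order $\frac{1}{N^{2}}\cdot N\epsilon\cdot N^{2}=N\epsilon$ and the switching family of order $N\epsilon^{2}$; both \emph{diverge} in the iterated limit $N\rightarrow\infty$ then $\epsilon\rightarrow0$. These terms are not garbage that better counting can remove: since $\Psi_{\epsilon}(x_{i}-x_{j})$ and $z_{i}^{N}$ are continuous across $F_{jm}$, the two sides of each such face contribute with opposite signs, so what you are really left with is $\sum\int g(\mathbf{x}^{N}(s))\left[dA_{jm}^{N}(s)-dA_{mj}^{N}(s)\right]$ with $O(1)$ state-dependent weights $g$ --- precisely the microscopic currents \eqref{cur} that make this system non-gradient. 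Their smallness is a cancellation statement that Markov's inequality applied to an absolute value cannot see. The paper meets the very same objects, but in weak form, as $\frac{1}{N^{2}}\sum\int_{F_{jk}}|f_{jk}-f_{kj}|M_{\epsilon,\cdot}\,dS_{jk}$ inside $K_{ij}^{(2)}$ in Lemma \ref{lem2}, and kills them by Cauchy--Schwarz against the jump part $\frac{\lambda N}{2}\sum\int(\sqrt{f_{jk}}-\sqrt{f_{kj}})^{2}dS$ of the Dirichlet form \eqref{e33} --- that is the origin of the factor $N^{-1/2}$ in the third estimate of Lemma \ref{lem37} --- and this Dirichlet-form control is available only inside the variational/Feynman--Kac framework in equilibrium, combined with \eqref{e38} to leave equilibrium. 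Unless you import that machinery (at which point your argument becomes the paper's), the error terms in your decomposition cannot be closed.
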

We now prove this theorem by several steps. Define $\mathscr{P}_{N}=\{f\in\overline{\mathcal{C}}(G_{N}):\int_{G_{N}}f(x)dx=1\}$,
and for $\epsilon>0$ and $1\le i\le N$, define 
\[
M_{\epsilon,i}(\mathbf{x})=\sum_{j:j\neq i}\mathds{1}_{[-\epsilon,\epsilon]}(x_{j}-x_{i})\;.
\]
The following lemma was proven in \cite[Lemmata 2.6, 2.7 and 2.10]{S}. 
\begin{lem}
\label{lem37}For $f\in\mathscr{P}_{N}$ and $\epsilon\in(0,\,1/4)$,
the following estimates hold:
\begin{eqnarray*}
 &  & \frac{1}{N^{2}}\sum_{i=1}^{N}\int_{G_{N}}f(\mathbf{x})M_{\epsilon,i}(\mathbf{x})d\mathbf{x}\le C\epsilon\left[1+\left(\mathcal{\widetilde{D}}_{N}(f)/N\right)^{1/2}\right]\;,\\
 &  & \frac{1}{N^{2}}\sum_{i=1}^{N}\int_{G_{N}}\left|(\partial_{i}f)(\mathbf{x})\right|M_{\epsilon,i}(\mathbf{x})d\mathbf{x}\le C\epsilon^{1/2}\left[1+\left(\mathcal{\mathcal{\widetilde{D}}}_{N}(f)/N\right)^{3/4}\right]\;,\\
 &  & \frac{1}{N^{2}}\sum_{i\neq j}\int_{F_{ij}}\left|f_{ij}(\mathbf{x})-f_{ji}(\mathbf{x})\right|M_{\epsilon,i}(\mathbf{x})dS_{ij}(\mathbf{x})\le C(\epsilon^{1/4}+N^{-1/2})\left[1+\left(\mathcal{\mathcal{\widetilde{D}}}_{N}(f)/N\right)^{7/8}\right]\;.
\end{eqnarray*}
Furthermore, these estimates are still valid if we replace $\mathcal{\mathcal{\widetilde{D}}}_{N}$
with $\mathcal{D}_{N}$. 
\end{lem}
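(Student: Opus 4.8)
All three inequalities estimate an integral that is localized to the collision region $\{|x_i-x_j|\le\epsilon\}$, and the plan is to prove them as a single hierarchy in which each bound is fed into the next. The common engine is a one-dimensional fundamental-theorem-of-calculus estimate in the relative coordinate $x_j-x_i$, which extracts the explicit power of $\epsilon$, followed by a Cauchy--Schwarz step that converts the residual gradient (or boundary-jump) factor into the Dirichlet form $\widetilde{\mathcal D}_N$. By the equivalence \eqref{e35} it is immaterial whether $\widetilde{\mathcal D}_N$ or $\mathcal D_N$ appears, which yields the final sentence of the statement for free.

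\textbf{Base estimate (first inequality).} Fix an ordered pair $(i,j)$ and set $I_{ij}=\int_{G_N}f\,\mathds{1}_{[-\epsilon,\epsilon]}(x_j-x_i)\,d\mathbf x$, so the left side is $N^{-2}\sum_{i\neq j}I_{ij}$. Viewing $f$ as a function of $x_i$ with the other coordinates frozen and integrating over the window of length $2\epsilon$ about $x_j$, the pointwise bound $f(x_i)\le\int_{\mathbb T}f\,dy+\mathrm{TV}_{x_i}(f)$ gives $I_{ij}\le 2\epsilon\big[\int_{G_N}f+\int_{G_N}|\partial_i f|\big]+(\text{jump remainder})$, where the total variation decomposes into the bulk gradient $\int|\partial_i f|$ and the jumps $|f_{ik}-f_{ki}|$ incurred whenever a third particle sits in the window. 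Since $\int_{G_N}f=1$, summing the first term over the $O(N^2)$ pairs and dividing by $N^2$ produces the leading constant; for the gradient term I would use $\int|\partial_i f|\le\big(\int(\partial_i f)^2/f\big)^{1/2}$ and a Cauchy--Schwarz over $i$, which produces exactly $(\widetilde{\mathcal D}_N(f)/N)^{1/2}$.

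\textbf{Bootstrap (second and third inequalities).} For the second bound I would carry the weight $|\partial_i f|$ through the window argument, splitting $|\partial_i f|=\frac{|\partial_i f|}{\sqrt f}\sqrt f$ and applying Cauchy--Schwarz against $M_{\epsilon,i}$ inside the integral: one factor is $\big(\int(\partial_i f)^2 f^{-1}M_{\epsilon,i}\big)^{1/2}$, bounded crudely via $M_{\epsilon,i}\le N$ and the bulk part of $\widetilde{\mathcal D}_N$, and the other is $\big(\int f\,M_{\epsilon,i}\big)^{1/2}$, controlled by the first inequality; a second Cauchy--Schwarz over $i$ then combines $(\widetilde{\mathcal D}_N/N)^{1/2}$ with $\epsilon^{1/2}[1+(\widetilde{\mathcal D}_N/N)^{1/2}]^{1/2}$ to give $\epsilon^{1/2}[1+(\widetilde{\mathcal D}_N/N)^{3/4}]$. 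The third bound is the boundary analogue: on each face I factor $|f_{ij}-f_{ji}|=|\sqrt{f_{ij}}-\sqrt{f_{ji}}|\,(\sqrt{f_{ij}}+\sqrt{f_{ji}})$ and apply Cauchy--Schwarz across the faces, so the difference factor reproduces the boundary part of $\widetilde{\mathcal D}_N$ (again using $M_{\epsilon,i}\le N$) while the sum factor is a face collision mass $\sum_{i\neq j}\int_{F_{ij}}(f_{ij}+f_{ji})M_{\epsilon,i}$ that I would reduce to the bulk by a trace/FTC step and then estimate by the first two inequalities; chaining the exponents gives $\epsilon^{1/4}[1+(\widetilde{\mathcal D}_N/N)^{7/8}]$, with the additive $N^{-1/2}$ coming from the boundary-to-bulk approximation.

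\textbf{Main obstacle.} The genuine difficulty is the discontinuity of $f$ across the faces $F_{ij}$: the one-dimensional Taylor step is clean only when no third particle lies in the $\epsilon$-window, and in general it generates boundary-jump remainders $|f_{ik}-f_{ki}|$ that are exactly the objects controlled by the third inequality. This couples the three bounds and is why they must be proved as a coordinated hierarchy, each supplying a weaker power of the Dirichlet form and a sharper power of $\epsilon$ to the next, rather than independently. A secondary but essential point is the bookkeeping: one must ensure that summation over the $O(N^2)$ ordered pairs, together with the normalization of $dS_{ij}$ and the two applications of Cauchy--Schwarz, introduces no spurious factor of $N$, and tracking these factors is precisely what pins down the exponents $1/2,\,3/4,\,7/8$ of $\widetilde{\mathcal D}_N/N$ and $1,\,1/2,\,1/4$ of $\epsilon$.
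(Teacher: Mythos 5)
Your proposal should first be set against what the paper actually does: the paper does not reprove this lemma at all, but cites \cite[Lemmata 2.6, 2.7 and 2.10]{S}, remarking only that the proofs are based entirely on Green's formula \eqref{gr} and that the $\mathcal{D}_{N}$-variant follows from the equivalence \eqref{e35}. Your outer architecture is broadly faithful to that machinery: the ``FTC in the relative coordinate with jump remainders at the faces'' is Green's formula \eqref{gr} specialized to one coordinate direction; your second step (split $|\partial_{i}f|=\frac{|\partial_{i}f|}{\sqrt{f}}\sqrt{f}$, Cauchy--Schwarz with the crude bound $M_{\epsilon,i}\le N$, invoke the first estimate) and your third step (factor $|f_{ij}-f_{ji}|=|\sqrt{f_{ij}}-\sqrt{f_{ji}}|(\sqrt{f_{ij}}+\sqrt{f_{ji}})$, Cauchy--Schwarz against the boundary part of $\widetilde{\mathcal{D}}_{N}$, and a boundary-to-bulk trace step producing the $N^{-1/2}$) do reproduce the exponent doubling $1/2\to3/4\to7/8$ \emph{provided} the first estimate is available. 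Your closing sentence about \eqref{e35} is exactly the paper's own remark.

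The genuine gap is at the base of your hierarchy, and you half-concede it yourself. In the first estimate you bound the gradient part and then leave ``(jump remainder)'' unestimated, and in your ``Main obstacle'' paragraph you say this remainder consists of exactly the objects controlled by the third inequality --- but your scheme derives the third inequality from the first two, so as described it is circular and does not close. Moreover your base step is internally inconsistent: a sup-bound against the window average $\fint_{\mathrm{win}}f$ is vacuous (it reproduces the left side), so you are forced to the whole-circle bound $f(x_{i})\le\int_{\mathbb{T}}f\,dy+\mathrm{TV}_{x_{i}}(f)$, whose total variation picks up jumps at \emph{all} faces, not only those in the window. The only non-circular patch then is Cauchy--Schwarz of $2\epsilon N^{-1}\sum_{i\neq k}\int_{F_{ik}}|f_{ik}-f_{ki}|\,dS_{ik}$ against the boundary part of $\widetilde{\mathcal{D}}_{N}$ and the \emph{unlocalized} face collision mass $\sum_{i\neq k}\int_{F_{ik}}(f_{ik}+f_{ki})\,dS_{ik}\le CN^{2}\bigl[1+(\widetilde{\mathcal{D}}_{N}(f)/N)^{1/2}\bigr]$ --- a separate lemma your proposal never isolates (it is \cite[Lemma 2.5]{S}, the estimate this paper invokes at \eqref{e323-1}, and it is the true base of the chain). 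That patch yields $C\epsilon\bigl[(\widetilde{\mathcal{D}}_{N}/N)^{1/2}+(\widetilde{\mathcal{D}}_{N}/N)^{3/4}\bigr]$, i.e.\ exponent $3/4$ rather than the stated $1/2$, and propagating it through your bootstrap degrades the three bounds to exponents $3/4$, $7/8$, $15/16$. Exponents strictly below $1$ would still suffice for the absorption step in the proof of Proposition \ref{t5}, but they do not prove the lemma as stated; recovering $1/2$ requires the window-localized Green's-formula arguments of \cite[Section 2.2]{S}, with sawtooth test fields such as the $g_{\epsilon}$ appearing in the proof of Lemma \ref{lem2}, rather than a whole-circle total-variation bound. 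That, together with the omitted face-mass lemma, is the missing content.
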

The proof of these estimates are entirely based on Green's formula
\eqref{gr}, and we refer to \cite[Section 2.2]{S} for the detailed
proofs of these estimates. Note that we can replace $\widetilde{\mathcal{D}}_{N}$
with $\mathcal{D}_{N}$ because of \eqref{e35}.

Let $z_{k}=r_{N,k}(\mathbf{x})$, $1\le k\le N$, where $r_{N,k}$
is defined in \eqref{e301}. The following lemma is a generalization
of \cite[Proposition 2.11]{S}.
\begin{lem}
\label{lem2} For all $f\in\mbox{\ensuremath{\mathscr{P}}}_{N}$,
$h\in C^{1}(\mathbb{T})$ and $0<\epsilon<\frac{1}{4}$, we have that
\begin{align}
 & \frac{1}{N^{2}}\sum_{i\neq j}\left|\int_{G_{N}}h(z_{i})f(\mathbf{x})\iota_{\epsilon}(x_{j}-x_{i})d\mathbf{x}-\frac{1}{2}\int_{F_{ij}}h(z_{i})(f_{ij}+f_{ji})(\mathbf{x})dS_{ij}(\mathbf{x})\right|\nonumber \\
 & \quad\le C\left(1+[\mathcal{D}_{N}(f)/N]^{\frac{7}{8}}\right)\left(\epsilon^{\frac{1}{4}}+N^{-\frac{1}{2}}\right)\label{e52}
\end{align}
for some constant $C=C(h)>0$ that only depends on $h$.\end{lem}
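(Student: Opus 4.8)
My plan is to regard $\iota_{\epsilon}(x_{j}-x_{i})=(2\epsilon)^{-1}\mathds{1}_{[-\epsilon,\epsilon]}(x_{j}-x_{i})$ as a two-sided approximation of the surface measure on $\{x_{i}=x_{j}\}$ and to extract the leading surface term by a coarea/fundamental-theorem-of-calculus computation transverse to this face, controlling every remainder through the three estimates of Lemma \ref{lem37}. I would fix an ordered pair $i\neq j$ and split $G_{N}=U_{ij}\cup U_{ji}$. On $U_{ij}$ the mollifier is supported on the slab $0<x_{j}-x_{i}\le\epsilon$, and on $U_{ji}$ on $-\epsilon\le x_{j}-x_{i}<0$. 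Integrating out $x_{j}$ along each fibre and replacing $f$ by its boundary trace, the $U_{ij}$-slab should produce $\tfrac12(h(z_{i})f)_{ij}$ and the $U_{ji}$-slab $\tfrac12(h(z_{i})f)_{ji}$ on $F_{ij}$, since each half has transverse width $\epsilon$ while $\iota_{\epsilon}$ carries the normalization $(2\epsilon)^{-1}$. This is exactly the claimed main term $\tfrac12\int_{F_{ij}}h(z_{i})(f_{ij}+f_{ji})\,dS_{ij}$, once one observes that on $F_{ij}$ we have $\nu(0)=0$, so the value of $z_{i}=r_{N,i}(\mathbf{x})$ from \eqref{e301} appearing there coincides with the $U_{ij}$-trace of $h(z_{i})$.

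The remainder then splits into three pieces. First, replacing $f$ by its trace inside each slab will cost $\int_{G_{N}}|\partial_{j}f|\,\mathds{1}_{[-\epsilon,\epsilon]}(x_{j}-x_{i})\,d\mathbf{x}$; after summing over $j$ and using $\sum_{j\neq i}\mathds{1}_{[-\epsilon,\epsilon]}(x_{j}-x_{i})=M_{\epsilon,i}(\mathbf{x})$, the second estimate of Lemma \ref{lem37} will bound this by $C\epsilon^{1/2}[1+(\mathcal{D}_{N}(f)/N)^{3/4}]$. Second, I must account for the fact that $h(z_{i})$ is not constant along the fibre: away from collisions $\partial_{j}[h(z_{i})]=h'(z_{i})\tfrac{\alpha}{N\sigma_{c(j)}^{2}}=O(1/N)$, and as $x_{j}$ crosses $x_{i}$ the discontinuity of $\nu$ makes $z_{i}$, hence $h(z_{i})$, jump by $O(1/N)$, which also produces the discrepancy between the two traces of $h(z_{i})$ in the main term. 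Both effects carry an extra factor $N^{-1}$ and, via the first estimate of Lemma \ref{lem37}, contribute only $O(N^{-1})$-order terms that are absorbed into the $N^{-1/2}$ on the right-hand side of \eqref{e52}.

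The dominant error, and the step I expect to be the main obstacle, is the third piece, coming from configurations in which some other particle $x_{k}$ lies within $\epsilon$ of $x_{i}$: then the transverse slab in $x_{j}$ meets the face $F_{jk}$, the naive identification of the fibre integral with a single pair of traces breaks down, and one instead picks up the discrepancy of the traces of $f$ across $F_{jk}$, weighted by the indicator that $x_{k}$ is $\epsilon$-close to $x_{i}$. Summed over $k$ this is precisely a quantity of the form $\int_{F}|f_{\cdot\cdot}-f_{\cdot\cdot}|\,M_{\epsilon,i}\,dS$, which the third estimate of Lemma \ref{lem37} bounds by $C(\epsilon^{1/4}+N^{-1/2})[1+(\mathcal{D}_{N}(f)/N)^{7/8}]$. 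Since $\epsilon^{1/4}\ge\epsilon^{1/2}$ for $\epsilon\in(0,\tfrac14)$ and $7/8>3/4$, this term dominates the others and yields \eqref{e52}. Making the geometric bookkeeping of this crossing-face contribution precise---isolating exactly the trace-difference structure together with the weight $M_{\epsilon,i}$, uniformly in the number of intruding particles---is where the bulk of the work lies; the two-component feature enters only through the $\sigma_{c(\cdot)}^{2}$-weights in $z_{i}$ and in $\mathcal{D}_{N}$, and is handled throughout by the equivalence \eqref{e35} that permits Lemma \ref{lem37} to be applied with $\mathcal{D}_{N}$ in place of $\widetilde{\mathcal{D}}_{N}$.
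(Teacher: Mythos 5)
Your proposal is correct and follows essentially the same route as the paper: the identical decomposition into the surface main term plus three errors (trace replacement along the fibre, the $O(1/N)$ variation of $h(z_{i})$, and the jumps of $f$ across the faces $F_{jk}$ met by the slab), each closed by the corresponding estimate of Lemma \ref{lem37} via the equivalence \eqref{e35}. The paper merely automates the transverse bookkeeping you flag as the main obstacle by applying Green's formula \eqref{gr} to the vector field $\mathbf{V}_{ij}=h(z_{i})g_{\epsilon}(x_{i}-x_{j})\mathbf{e}_{j}$, where $g_{\epsilon}$ is a sawtooth primitive of $\iota_{\epsilon}$ with boundary values $\pm\frac{1}{2}$: the volume term of \eqref{gr} then yields $h(z_{i})\iota_{\epsilon}$ plus the $O(1/N)$ correction, while the boundary terms yield at once the main term $\frac{1}{2}\int_{F_{ij}}h(z_{i})(f_{ij}+f_{ji})\,dS_{ij}$ and the crossing-face contributions $\sum_{k\neq i,j}\int_{F_{jk}}(f_{jk}-f_{kj})\,h(z_{i})\,g_{\epsilon}(x_{i}-x_{j})\,dS_{jk}$, so no separate geometric argument is required.
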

\begin{proof}
Define a function $g_{\epsilon}$ on $[0,\,1)$ by
\[
g_{\epsilon}(x)=\left(\frac{x}{2\epsilon}-\frac{1}{2}\right)\mathds{1}_{[0,\epsilon]}(x)+\left(\frac{x-1}{2\epsilon}+\frac{1}{2}\right)\mathds{1}_{[1-\epsilon,1)}(x)
\]
and regard $g_{\epsilon}$ as a discontinuous function on $\mathbb{T}$.
Fix $i\neq j$ and define a vector field $\mathbf{V}_{ij}(\mathbf{x})=h(z_{i})g_{\epsilon}(x_{i}-x_{j})\mathbf{e}_{j}$
which is continuous on $G_{N}$. Fix $f\in\mathscr{P}_{N}$. By \eqref{gr}
with $u=f$, and $\mathbf{V}=\mathbf{V}_{ij}$, we obtain 
\begin{equation}
\int_{G_{N}}(\partial_{j}f)(\mathbf{x})\left[h(z_{i})g_{\epsilon}(x_{i}-x_{j})\right]d\mathbf{x}=K_{ij}^{(1)}+K_{ij}^{(2)}\;,\label{ed1}
\end{equation}
where
\begin{align*}
K_{ij}^{(1)}= & \int_{G_{N}}f(\mathbf{x})\partial_{j}\left[h(z_{i})g_{\epsilon}(x_{i}-x_{j})\right]d\mathbf{x}\\
K_{ij}^{(2)}= & \int_{F_{ji}}f_{ji}(\mathbf{x})h(z_{i})g_{\epsilon}(x_{i}-x_{j})dS_{ji}(\mathbf{x})-\int_{F_{ij}}f_{ij}(\mathbf{x})h(z_{i})g_{\epsilon}(x_{i}-x_{j})dS_{ij}(\mathbf{x})\\
 & +\sum_{k:k\neq i,\,j}\left[\int_{F_{jk}}f_{jk}(\mathbf{x})h(z_{i})g_{\epsilon}(x_{i}-x_{j})dS_{jk}(\mathbf{x})-\int_{F_{kj}}f_{kj}(\mathbf{x})h(z_{i})g_{\epsilon}(x_{i}-x_{j})dS_{kj}(\mathbf{x})\right]\;.
\end{align*}
Note that $K_{ij}^{(1)}$ and $K_{ij}^{(2)}$ correspond to the first
and second terms of the right hand side of \eqref{gr}, respectively.
By simple computations, we deduce
\begin{equation}
K_{ij}^{(1)}=\int_{G_{N}}f(\mathbf{x})\bigg[\frac{\alpha}{N\sigma_{c(j)}^{2}}h'(z_{i})g_{\epsilon}(x_{i}-x_{j})+h(z_{i})\iota_{\epsilon}(x_{i}-x_{j})\bigg]d\mathbf{x}\;.\label{ed2}
\end{equation}
Since $g_{\epsilon}(x_{i}-x_{j})=\frac{1}{2}$ on $F_{ij}$ and $-\frac{1}{2}$
on $F_{ji}$, and since $h(z_{i})g_{\epsilon}(x_{i}-x_{j})$ has same
value on $F_{jk}$ and $F_{kj}$ for $k\neq i,\,j$, we can simplify
$K_{ij}^{(2)}$ to 
\begin{align}
K_{ij}^{(2)}= & -\frac{1}{2}\int_{F_{ij}}h(z_{i})\left(f_{ij}(\mathbf{x})+f_{ji}(\mathbf{x})\right)dS_{ij}(\mathbf{x})\nonumber \\
 & +\sum_{k:k\neq i,\,j}\left[\int_{F_{jk}}(f_{jk}(\mathbf{x})-f_{kj}(\mathbf{x}))h(z_{i})g_{\epsilon}(x_{i}-x_{j})dS_{jk}(\mathbf{x})\right]\;.\label{ed3}
\end{align}
By \eqref{ed1}, \eqref{ed2}, \eqref{ed3} and by an elementary inequality
$\left|g_{\epsilon}\right|\le\frac{1}{2}\chi_{\epsilon}$, we can
bound the left hand side of (\ref{e52}) by
\begin{align*}
 & \frac{C(h)}{N^{2}}\sum_{i=1}^{N}\int_{G_{N}}\left|\partial_{i}f(\mathbf{x})\right|M_{\epsilon,i}(\mathbf{x})d\mathbf{x}+\frac{C}{N^{3}}\sum_{i=1}^{N}\int_{G_{N}}f(\mathbf{x})M_{\epsilon,i}(\mathbf{x})d\mathbf{x}\\
 & +\frac{C(h)}{N^{2}}\sum_{i\neq j}\int_{F_{jk}}\left|f_{ij}(\mathbf{x})-f_{ij}(\mathbf{x})\right|M_{\epsilon,i}(\mathbf{x})dS_{ij}(\mathbf{x})\;.
\end{align*}
The proof is completed by Lemma \ref{lem37}. 
\end{proof}
Based on the previous lemma, we obtain the following super-exponential
estimate for the equilibrium process.
\begin{prop}
\label{t5}For all $c_{1},\,c_{2}\in\{1,\,2\}$, $\delta>0$, $0\le t_{1}<t_{2}\le T$,
and $h\in C^{1}(\mathbb{T})$, we have that 
\begin{align}
 & \limsup_{\epsilon\rightarrow0}\limsup_{N\rightarrow\infty}\frac{1}{N}\log\mathbb{P}_{N}^{eq}\bigg[\bigg|\frac{1}{N}\sum_{i\in\mathcal{T}_{c_{1}}^{N}}\int_{t_{1}}^{t_{2}}h(z_{i}^{N}(t))\left[dA_{i,c_{2}}^{N}(t)-\rho_{i,c_{2}}^{N,\epsilon}(\mathbf{x}^{N}(t))dt\right]\bigg|>\delta\bigg]=-\infty\;.\label{e56}
\end{align}
\end{prop}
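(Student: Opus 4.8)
The plan is to combine the exponential Chebyshev inequality with the Feynman--Kac variational bound for the reversible equilibrium process, and then to absorb the resulting variational problem using Lemma \ref{lem2}. Denote by
\[
X_{N,\epsilon}=\frac{1}{N}\sum_{i\in\mathcal{T}_{c_{1}}^{N}}\int_{t_{1}}^{t_{2}}h(z_{i}^{N}(t))\left[dA_{i,c_{2}}^{N}(t)-\rho_{i,c_{2}}^{N,\epsilon}(\mathbf{x}^{N}(t))dt\right]
\]
the quantity inside the probability. Since $\mathbb{P}_{N}^{eq}[|X_{N,\epsilon}|>\delta]\le\mathbb{P}_{N}^{eq}[X_{N,\epsilon}>\delta]+\mathbb{P}_{N}^{eq}[-X_{N,\epsilon}>\delta]$ and, by the exponential Chebyshev inequality, $\mathbb{P}_{N}^{eq}[\pm X_{N,\epsilon}>\delta]\le e^{-aN\delta}\,\mathbb{E}_{N}^{eq}[e^{\pm aNX_{N,\epsilon}}]$ for every $a>0$, taking $\frac{1}{N}\log$ reduces the whole statement to showing that, for each fixed $a>0$,
\[
\limsup_{\epsilon\to0}\limsup_{N\to\infty}\frac{1}{N}\log\mathbb{E}_{N}^{eq}\big[e^{\pm aNX_{N,\epsilon}}\big]\le0\;.
\]
Indeed, the $\limsup$ of the left-hand side of \eqref{e56} is then bounded by $-a\delta$, and letting $a\to\infty$ gives $-\infty$. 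By symmetry it suffices to treat the $+$ sign.

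The central step is to convert the exponential moment into a static variational problem. The functional $aNX_{N,\epsilon}=a\sum_{i\in\mathcal{T}_{c_{1}}^{N}}\int_{t_{1}}^{t_{2}}h(z_{i}^{N})[dA_{i,c_{2}}^{N}-\rho_{i,c_{2}}^{N,\epsilon}dt]$ is an additive functional with a bulk part carried by $dt$ and a boundary part carried by the local times $A_{ij}^{N}$. Because the equilibrium process is reversible with respect to $d\mathbf{x}$, the Feynman--Kac formula for such functionals together with the Rayleigh--Ritz principle for the self-adjoint generator $\mathscr{L}_{N}$ yields
\[
\frac{1}{N}\log\mathbb{E}_{N}^{eq}\big[e^{aNX_{N,\epsilon}}\big]\le(t_{2}-t_{1})\sup_{f\in\mathscr{P}_{N}}\Big\{\frac{a}{N}\Gamma_{\epsilon}(f)-\frac{1}{N}\mathcal{D}_{N}(f)\Big\}\;,
\]
where, using that the Revuz measure of $A_{ij}^{N}$ relative to $d\mathbf{x}$ is $\tfrac{1}{2}dS_{ij}$, the functional obtained by pairing the instantaneous rate against the trial density $f$ is exactly
\[
\Gamma_{\epsilon}(f)=\frac{1}{N}\sum_{i\in\mathcal{T}_{c_{1}}^{N}}\sum_{j\in\mathcal{T}_{c_{2}}^{N}}\bigg[\frac{1}{2}\int_{F_{ij}}h(z_{i})(f_{ij}+f_{ji})dS_{ij}-\int_{G_{N}}h(z_{i})f\,\iota_{\epsilon}(x_{j}-x_{i})d\mathbf{x}\bigg]\;.
\]
This is precisely the quantity estimated in Lemma \ref{lem2}: the boundary part produced by the local time $A_{i,c_{2}}^{N}$ is the surface integral there, and the bulk part produced by $\rho_{i,c_{2}}^{N,\epsilon}$ is the local-density integral.

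It remains to show that the supremum is non-positive in the limit. Writing $D=\mathcal{D}_{N}(f)/N$ and $\gamma=\epsilon^{1/4}+N^{-1/2}$, Lemma \ref{lem2} (in its $\mathcal{D}_{N}$ form) gives $\frac{1}{N}|\Gamma_{\epsilon}(f)|\le C(1+D^{7/8})\gamma$, so each term inside the supremum is at most $aC(1+D^{7/8})\gamma-D$. The decisive point is that the exponent $7/8<1$: maximizing the scalar function $D\mapsto aC\gamma D^{7/8}-D$ over $D\ge0$ produces a finite bound of the form $C'(a\gamma)^{8}$, whence
\[
\frac{1}{N}\log\mathbb{E}_{N}^{eq}\big[e^{aNX_{N,\epsilon}}\big]\le(t_{2}-t_{1})\big[aC\gamma+C'(a\gamma)^{8}\big]\;.
\]
Letting $N\to\infty$ (so that $\gamma\to\epsilon^{1/4}$) and then $\epsilon\to0$ makes the right-hand side vanish, which is the required bound, and the first paragraph then completes the proof.

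I expect the main obstacle to be the rigorous Feynman--Kac step for the boundary local-time functional: one must justify that the exponential moment of $a\sum_{i}\int h(z_{i})\,dA_{i,c_{2}}^{N}$ is governed by a variational expression in which the local time is replaced by its Revuz (surface) measure on the faces $F_{ij}$, and that the emerging surface integral is exactly the one appearing in Lemma \ref{lem2}. Once this identification is in place the remaining analysis is routine, since the sub-linear ($7/8$) dependence of Lemma \ref{lem2} on the Dirichlet form is precisely what lets the term $\mathcal{D}_{N}(f)$ dominate the potential and drive the supremum to zero.
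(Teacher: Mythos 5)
Your proposal is correct and follows essentially the same route as the paper: exponential Chebyshev, then the Feynman--Kac/Rayleigh--Ritz variational bound for the reversible generator in which the local time pairs with the surface measure $\tfrac{1}{2}(f_{ij}+f_{ji})\,dS_{ij}$, then Lemma \ref{lem2}, and finally the optimization over $D=\mathcal{D}_N(f)/N$ exploiting the sublinear exponent $7/8$, with $a\to\infty$ at the end. The only (cosmetic) difference is how the absolute value is handled: you split the event according to the sign of the whole sum, whereas the paper keeps per-particle absolute values and sums over the $2^N$ sign patterns $\mathbf{e}\in\mathcal{E}_N$ via $e^{|x|}\le e^{x}+e^{-x}$, which costs an extra harmless $\log 2$.
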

\begin{proof}
Denote simply by $E(N,h,\epsilon,\delta)$ the event inside the bracket
of \eqref{e56}. By Chebyshev's inequality, for any $a>0$, 
\begin{align}
 & \frac{1}{N}\log\mathbb{P}_{N}^{eq}\left[E(N,h,\epsilon,\delta)\right]\nonumber \\
 & \le-a\delta+\frac{1}{N}\log\mathbb{E}_{N}^{eq}\exp\bigg\{ a\sum_{i\in\mathcal{T}_{c_{1}}^{N}}\left|\int_{t_{1}}^{t_{2}}h(z_{i}^{N}(t))\left[dA_{i,c_{2}}^{N}(t)-\rho_{i,c_{2}}^{N,\epsilon}(\mathbf{x}^{N}(t))dt\right]\right|\bigg\}\;.\label{ee1}
\end{align}
Let $\mathcal{E}_{N}=\{(e_{1},e_{2},\cdots,e_{N}):e_{i}=\pm1\,\,\forall\ensuremath{1\le i\le N}\}$.
By the inequality $e^{|x|}\le e^{x}+e^{-x}$ and Feynman-Kac's formula,
we can bound the last expectation by 
\begin{eqnarray}
 &  & \sum_{(e_{1},\cdots,e_{N})\in\mathcal{E}_{N}}\mathbb{E}_{N}^{eq}\exp\bigg\{ a\sum_{i\in\mathcal{T}_{c_{1}}^{N}}e_{i}\int_{t_{1}}^{t_{2}}h(z_{i}^{N}(t))\left[dA_{i,c_{2}}^{N}(t)-\rho_{i,c_{2}}^{N,\epsilon}(\mathbf{x}^{N}(t))dt\right]\bigg\}\nonumber \\
 &  & \quad\le\sum_{\mathbf{e}\in\mathcal{E}_{N}}\exp\{(t_{2}-t_{1})\lambda_{h,N,\epsilon,a,\mathbf{e}}\}\;,\label{ee2}
\end{eqnarray}
where $\lambda_{h,N,\epsilon,a,\mathbf{e}}$, $\mathbf{e}=(e_{1},\cdots,e_{N})\in\mathcal{E}_{N}$,
is the largest eigenvalue of the operator
\[
\mathscr{L}_{N}+\frac{a}{N}\sum_{i\in\mathcal{T}_{c_{1}}^{N},\,j\in\mathcal{T}_{c_{2}}^{N}}e_{i}\,h(z_{i})\bigg\{\iota_{\epsilon}(x_{j}-x_{i})-\delta(x_{j}-x_{i})\bigg\}\;.
\]
Assume now that, for all $\mathbf{e}\in\mathcal{E}_{N}$,
\begin{equation}
\lambda_{h,N,\epsilon,a,\mathbf{e}}\le CN\left[a\left(\epsilon^{\frac{1}{4}}+N^{-\frac{1}{2}}\right)+a^{8}\left(\epsilon^{\frac{1}{4}}+N^{-\frac{1}{2}}\right)^{8}\right]\label{ee3}
\end{equation}
for some constant $C$ which depends only on $h$. Then, we can deduce
from \eqref{ee1}, \eqref{ee2} and \eqref{ee3} that
\[
\frac{1}{N}\log\mathbb{P}_{N}^{eq}\left[E(N,h,\epsilon,\delta)\right]\le-a\delta+\log2+C\left[a\left(\epsilon^{\frac{1}{4}}+N^{-\frac{1}{2}}\right)+a^{8}\left(\epsilon^{\frac{1}{4}}+N^{-\frac{1}{2}}\right)^{8}\right]
\]
and therefore the left hand side of \eqref{e56} is bounded above
by $-a\delta+\log2$. This completes the proof since $a$ is an arbitrary
positive number. Thus, the proof of proposition is reduced to the
verification of \eqref{ee3}. To this end, recall (cf. \cite[Section 7]{KL})
that the variational formula for $\lambda_{h,N,\epsilon,a,\mathbf{e}}$
is $\sup_{f\in\mathscr{P}_{N}}$ of 
\begin{align*}
\frac{a}{N}\sum_{i\in\mathcal{T}_{c_{1}}^{N},\,j\in\mathcal{T}_{c_{2}}^{N}}e_{i}\Biggl[ & \int_{G_{N}}h(z_{i})f(\mathbf{x})\iota_{\epsilon}(x_{j}-x_{i})dx-\frac{1}{2}\int_{F_{ij}}h(z_{i})\left(f_{ij}(\mathbf{x})+f_{ji}(\mathbf{x})\right)dS_{ij}(\mathbf{x})\Biggr]-\mathcal{D}_{N}(f)\;.
\end{align*}
By Lemma \ref{lem2}, we can bound this expression above by 
\[
N\left[Ca\left(1+[\mathcal{D}_{N}(f)/N]^{\frac{7}{8}}\right)\left(\epsilon^{\frac{1}{4}}+N^{-\frac{1}{2}}\right)-\mathcal{D}_{N}(f)/N\right]\;.
\]
Hence, we can prove \eqref{ee3} by an elementary inequality $a^{7/8}b-a\le(7b/8)^{8}$
for $a,\,b>0$. 
\end{proof}
By the argument presented in \eqref{e38}, we obtain the following
corollary. Notice that the test function $h$ depends only on the
spatial variable.
\begin{cor}
\label{p39}For all $c_{1},\,c_{2}\in\{1,\,2\}$, $\delta>0$, $0\le t_{1}<t_{2}\le T$,
and $h\in C^{1}(\mathbb{T})$, 
\[
\limsup_{\epsilon\rightarrow0}\limsup_{N\rightarrow\infty}\mathbb{P}_{N}\bigg[\bigg|\frac{1}{N}\sum_{i\in\mathcal{T}_{c_{1}}^{N}}\int_{t_{1}}^{t_{2}}h(z_{i}^{N}(t))\left[dA_{i,c_{2}}^{N}(t)-\rho_{i,c_{2}}^{N,\epsilon}(\mathbf{x}^{N}(t))dt\right]\bigg|>\delta\bigg]=0\;.
\]

\end{cor}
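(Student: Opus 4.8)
The plan is to transfer the super-exponential equilibrium estimate of Proposition \ref{t5} to the non-equilibrium measure $\mathbb{P}_N$ by the relative-entropy argument already exploited in \eqref{e38} to pass from Proposition \ref{p1} to Corollary \ref{c36}. Write $E(N,h,\epsilon,\delta)$ for the event inside the bracket of the corollary; this is precisely the event whose equilibrium probability is controlled by Proposition \ref{t5}. The starting point is the entropy inequality \cite[Proposition 8.2 of Appendix 1]{KL}, which for any event $E$ yields
\[
\mathbb{P}_N[E] \le \frac{2 + H[\mathbb{P}_N\,|\,\mathbb{P}_N^{eq}]}{\log\left(1 + \mathbb{P}_N^{eq}[E]^{-1}\right)}\;.
\]
Since $\mathbb{P}_N$ and $\mathbb{P}_N^{eq}$ share the same generator and differ only in their initial law, the Radon--Nikodym density $d\mathbb{P}_N/d\mathbb{P}_N^{eq}$ of a path depends only on its starting point, so the relative entropy equals the initial entropy $H[\mathbb{P}_N\,|\,\mathbb{P}_N^{eq}] = H_N(0) = \int_{G_N} f_N^0 \log f_N^0\, d\mathbf{x}$, which is bounded by $CN$ by Assumption \ref{a3}. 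Using $\log(1+x^{-1}) \ge -\log x$ and applying the inequality with $E = E(N,h,\epsilon,\delta)$ reproduces the bound of \eqref{e38},
\[
\mathbb{P}_N[E(N,h,\epsilon,\delta)] \le \frac{2 + CN}{-\log\mathbb{P}_N^{eq}[E(N,h,\epsilon,\delta)]}\;.
\]

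The second step is to observe that the denominator grows faster than linearly in $N$, precisely because Proposition \ref{t5} provides a super-exponential bound rather than a mere $o(1)$ estimate. Fix $M>0$. By Proposition \ref{t5} there is $\epsilon_0>0$ such that for every $\epsilon<\epsilon_0$ one has $\limsup_{N\to\infty}\frac{1}{N}\log\mathbb{P}_N^{eq}[E(N,h,\epsilon,\delta)] < -M$; hence for each such $\epsilon$ there is $N_0$ with $-\log\mathbb{P}_N^{eq}[E(N,h,\epsilon,\delta)] > MN$ for all $N\ge N_0$. Substituting into the entropy bound gives, for $N\ge N_0$,
\[
\mathbb{P}_N[E(N,h,\epsilon,\delta)] \le \frac{2 + CN}{MN} = \frac{2}{MN} + \frac{C}{M}\;,
\]
so $\limsup_{N\to\infty}\mathbb{P}_N[E(N,h,\epsilon,\delta)] \le C/M$ for every $\epsilon<\epsilon_0$. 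Taking $\limsup_{\epsilon\to 0}$ and then letting $M\to\infty$ forces the double $\limsup$ to vanish, which is exactly the assertion of the corollary.

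I do not anticipate a genuine obstacle here, as this is the standard equilibrium-to-nonequilibrium transfer. The only points requiring care are the order of the two limits ($\epsilon\to 0$ taken inside $N\to\infty$) and the verification that the equilibrium estimate is strong enough to overwhelm the relative entropy. The latter is the structural reason Proposition \ref{t5} was formulated super-exponentially: since $H[\mathbb{P}_N\,|\,\mathbb{P}_N^{eq}] = O(N)$, an equilibrium probability decaying only polynomially or sub-exponentially would be useless, and it is essential that $\frac{1}{N}\log\mathbb{P}_N^{eq}[E]$ diverge to $-\infty$ after the double limit. No ingredient beyond Proposition \ref{t5}, Assumption \ref{a3}, and the entropy inequality is needed.
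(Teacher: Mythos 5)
Your proof is correct and is exactly the paper's argument: the paper proves this corollary by citing ``the argument presented in \eqref{e38}'', i.e., the entropy inequality of \cite[Proposition 8.2 of Appendix 1]{KL} with $H[\mathbb{P}_{N}|\mathbb{P}_{N}^{eq}]\le CN$ from Assumption \ref{a3}, combined with the super-exponential bound of Proposition \ref{t5}. Your write-up merely makes explicit the quantitative bookkeeping (fixing $M$, choosing $\epsilon_{0}$ and $N_{0}$, and letting $M\rightarrow\infty$) that the paper leaves implicit.
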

We now prove Theorem \ref{t2}.
\begin{proof}[Proof of Theorem \ref{t2}]
For any $\eta>0$, we can find a partition $0=t_{0}<t_{1}<\cdots<t_{M+1}=T$
of $[0,\,T]$ so that the function $\widetilde{h}(t,x)=\sum_{i=0}^{M}\mathds{1}_{[t_{i},t_{i+1}]}(t)h(t_{i},x)$
satisfies $||\widetilde{h}-h||_{\infty}<\eta$. By Corollary \ref{p39},
the statement of theorem holds for $\widetilde{h}$. Since $\eta>0$
is arbitrary, it suffices to verify that 
\begin{eqnarray}
 &  & \limsup_{M\rightarrow\infty}\limsup_{N\rightarrow\infty}\mathbb{P}_{N}\bigg[\bigg|\frac{1}{N}\sum_{i\in\mathcal{T}_{c_{1}}^{N}}A_{i,c_{2}}^{N}(T)\bigg|>M\bigg]=0\;,\;\label{ep1}\\
 &  & \limsup_{M\rightarrow\infty}\limsup_{\epsilon\rightarrow0}\limsup_{N\rightarrow\infty}\mathbb{P}_{N}\bigg[\bigg|\frac{1}{N}\sum_{i\in\mathcal{T}_{c_{1}}^{N}}\int_{0}^{T}\rho_{i,c_{2}}^{N,\epsilon}(\mathbf{x}^{N}(t))dt\bigg|>M\bigg]=0\;.\label{ep2}
\end{eqnarray}
For \eqref{ep1}, it is enough to prove that
\begin{equation}
\mathbb{E}_{N}\left[\frac{1}{N^{2}}\sum_{i\neq j}A_{ij}^{N}(t)\right]\le C\label{ep3}
\end{equation}
for some constant $C$ depending only on $T$. Note that the last
expectation can be written as 
\begin{equation}
\frac{t}{N^{2}}\sum_{i\neq j}\int_{F_{ij}}\left(\bar{f}_{N}^{[0,t]}\right)_{ij}(\mathbf{x})dS_{ij}(\mathbf{x})\;.\label{e323-1}
\end{equation}
By \cite[Lemma 2.5]{S}, this term is bounded by $\big[2+\sqrt{8N^{-1}\mathcal{\widetilde{D}}_{N}(\bar{f}_{N}^{[0,t]})}\big]t$.
Hence the proof of \eqref{ep3} can be completed by (\ref{e35}) and
Corollary \ref{c34}. 

For \eqref{ep2}, observe first that
\begin{eqnarray*}
 &  & \mathbb{P}_{N}\bigg[\bigg|\frac{1}{N}\sum_{i\in\mathcal{T}_{c_{1}}^{N}}\int_{0}^{T}\rho_{i,c_{2}}^{N,\epsilon}(\mathbf{x}^{N}(t))dt\bigg|>M\bigg]\\
 &  & \le\mathbb{P}_{N}\bigg[\bigg|\frac{1}{N}\sum_{i\in\mathcal{T}_{c_{1}}^{N}}A_{i,c_{2}}^{N}(T)\bigg|>\frac{M}{2}\bigg]+\mathbb{P}_{N}\bigg[\bigg|\frac{1}{N}\sum_{i\in\mathcal{T}_{c_{1}}^{N}}\left(A_{i,c_{2}}^{N}(T)-\int_{0}^{T}\rho_{i,c_{2}}^{N,\epsilon}(\mathbf{x}^{N}(t))dt\right)\bigg|>\frac{M}{2}\bigg]\;.
\end{eqnarray*}
Then, two probabilities can be controlled respectively by \eqref{ep1},
and by Corollary \ref{p39} with $h\equiv1$, $t_{1}=0$ and $t_{2}=T$,
respectively. 
\end{proof}

\subsection{Proof of Theorem \ref{t1}\label{s322}}

In the one-component system, the limit of uncolored empirical density
is obtained by the solution of the heat equation. By similar computation,
we can derive the following lemma.
\begin{lem}
\label{lem1}Let $\mathbb{Q}_{\infty}$ be a weak limit of $\{\mathbb{Q}_{N}\}_{N=1}^{\infty}$.
Then, $\mathbb{Q}_{\infty}$ is concentrated on the trajectory of
the form $(\rho_{1}(\cdot,x)dx,\,\rho_{2}(\cdot,x)dx)^{\dagger}$,
where $(\rho_{1},\rho_{2})$ weakly satisfies
\begin{equation}
\frac{\partial}{\partial t}\bigg[\frac{\rho_{1}(t,x)}{\sigma_{1}^{2}}+\frac{\rho_{2}(t,x)}{\sigma_{2}^{2}}\bigg]=\frac{1}{2}\rho_{xx}(t,x)\;.\label{e319}
\end{equation}
\end{lem}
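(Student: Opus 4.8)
The plan is to read off the weak formulation \eqref{e319} directly from the martingale identity \eqref{e3}, using a test function engineered so that all the boundary and switching contributions cancel. For $g\in C^{2}(\mathbb{T})$, I would work with
\[
f(\mathbf{x})=\sum_{k=1}^{N}\frac{1}{\sigma_{c(k)}^{2}}\,g(x_{k})\in\overline{\mathcal{C}}(G_{N})\;.
\]
The crucial observation — and the real content of the lemma — is that $\mathscr{B}_{ij}f\equiv0$ on every face $F_{ij}$. Indeed, $f$ depends only on the unordered positions, so its two one-sided boundary limits coincide, $f_{ij}=f_{ji}$, and the switching term in \eqref{e2} vanishes. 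Moreover, by \eqref{e12},
\[
(D_{ij}f)(\mathbf{x})=(\sigma_{c(i)}^{2}\partial_{i}-\sigma_{c(j)}^{2}\partial_{j})f=g'(x_{i})-g'(x_{j})\;,
\]
which is $0$ on $F_{ij}=\{x_{i}=x_{j}\}$ precisely because of the weights $\sigma_{c(k)}^{-2}$. This is exactly the reason the particular combination $\rho_{1}/\sigma_{1}^{2}+\rho_{2}/\sigma_{2}^{2}$ closes into a heat equation despite the two species diffusing at different rates.

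With this $f$, the generator term of \eqref{e3} collapses to $\sum_{i}\tfrac{\sigma_{c(i)}^{2}}{2}\partial_{i}^{2}f=\tfrac{1}{2}\sum_{i=1}^{N}g''(x_{i})$ and the boundary sum disappears, so that $M_{t}^{N}:=f(\mathbf{x}^{N}(t))-f(\mathbf{x}^{N}(0))-\tfrac{1}{2}\int_{0}^{t}\sum_{i}g''(x_{i}^{N}(s))\,ds$ is an $\{\mathscr{F}_{t}\}$-martingale. Using $\langle\mu_{c}^{N}(t),g\rangle=N^{-1}\sum_{i\in\mathcal{T}_{c}^{N}}g(x_{i}^{N}(t))$ and dividing by $N$, I would rewrite this as
\[
\sum_{c=1}^{2}\frac{1}{\sigma_{c}^{2}}\langle\mu_{c}^{N}(t),g\rangle-\sum_{c=1}^{2}\frac{1}{\sigma_{c}^{2}}\langle\mu_{c}^{N}(0),g\rangle-\frac{1}{2}\int_{0}^{t}\langle\mu_{1}^{N}(s)+\mu_{2}^{N}(s),g''\rangle\,ds=\frac{1}{N}M_{t}^{N}\;.
\]

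Next I would show the remainder vanishes. By the alternative expression \eqref{e4}, the compensated-Poisson part is multiplied by $f_{ij}-f_{ji}=0$ and hence drops out, leaving only the Brownian part $M_{t}^{N}=\sum_{i=1}^{N}\sigma_{c(i)}^{-1}\int_{0}^{t}g'(x_{i}^{N}(s))\,d\beta_{i}(s)$. Its quadratic variation is $\sum_{i}\sigma_{c(i)}^{-2}\int_{0}^{t}g'(x_{i}^{N}(s))^{2}\,ds\le C(g)\,N\,t$, so $\mathbb{E}_{N}[(N^{-1}M_{t}^{N})^{2}]=O(N^{-1})$, and after a maximal inequality $N^{-1}M^{N}\to0$ in $L^{2}(\mathbb{P}_{N})$ uniformly on $[0,T]$.

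Finally I would pass to the limit along a subsequence realizing $\mathbb{Q}_{\infty}$. By Corollary \ref{c36} the family $\{\mathbb{Q}_{N}\}$ is tight, and by Proposition \ref{p38} every limit point concentrates on absolutely continuous trajectories $(\rho_{1}(\cdot,x)dx,\rho_{2}(\cdot,x)dx)^{\dagger}$ with $\rho_{1},\rho_{2}\in L^{2}([0,T]\times\mathbb{T})$. Since $g,g',g''$ are bounded and continuous, the pairings above are continuous functionals on $C([0,T],\mathscr{M}(\mathbb{T})^{2})$, so letting $N\to\infty$ and invoking the vanishing of $N^{-1}M^{N}$ gives, for every $g\in C^{2}(\mathbb{T})$,
\[
\sum_{c=1}^{2}\frac{1}{\sigma_{c}^{2}}\int_{\mathbb{T}}\rho_{c}(t,x)g(x)\,dx-\sum_{c=1}^{2}\frac{1}{\sigma_{c}^{2}}\int_{\mathbb{T}}\rho_{c}^{0}(x)g(x)\,dx=\frac{1}{2}\int_{0}^{t}\int_{\mathbb{T}}\rho(s,x)g''(x)\,dx\,ds\;,
\]
which is the weak form of \eqref{e319} with $\rho=\rho_{1}+\rho_{2}$. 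The only genuinely delicate point is the cancellation $\mathscr{B}_{ij}f\equiv0$ of the first paragraph; everything thereafter is the standard martingale-plus-tightness routine, the one routine care being to upgrade the limit from fixed $t$ to the trajectory level, which follows from continuity of the functional together with the $L^{2}$-vanishing of the martingale remainder.
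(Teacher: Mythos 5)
Your proposal is correct and is essentially the paper's own argument: the paper applies \eqref{e3} and \eqref{e4} to the weighted sum $\sum_{k}\sigma_{c(k)}^{-2}f(t,x_{k})$ with $f\in C^{1,2}([0,T]\times\mathbb{T})$ (a time-dependent test function, a cosmetic difference from your time-independent $g$), and relies on exactly the same two cancellations you identify --- the reflection term $D_{ij}f=g'(x_{i})-g'(x_{j})=0$ on $F_{ij}$ thanks to the $\sigma^{-2}$ weights, and the switching term vanishing since $f_{ij}=f_{ji}$ --- to conclude that \eqref{e344} is a martingale with $O(N^{-1})$ second moment, which then vanishes in the limit. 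One small correction to your justification: $f_{ij}=f_{ji}$ holds not because $f$ ``depends only on the unordered positions'' (it does not, since the weights $\sigma_{c(k)}^{-2}$ are type-dependent), but because $f$ is the restriction to $G_{N}$ of a function continuous on all of $\mathbb{T}^{N}$, so both one-sided limits at any point of $F_{ij}$ coincide with its value there.
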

\begin{proof}
For any $f\in C^{1,2}([0,T]\times\mathbb{T})$, by \eqref{e3} and \eqref{e4},
we can check that 
\begin{equation}
\frac{1}{N}\sum_{i=1}^{N}\bigg[\frac{1}{\sigma_{c(i)}^{2}}f(t,x_{i}^{N}(t))-\frac{1}{\sigma_{c(i)}^{2}}f(0,x_{i}^{N}(0))-\int_{0}^{t}\left(f_{t}+\frac{1}{2}f_{xx}\right)(s,x_{k}^{N}(s))ds\bigg]\label{e344}
\end{equation}
is a martingale, and can be expressed as
\[
\frac{1}{N}\sum_{i=1}^{N}\frac{1}{\sigma_{c(i)}}\int_{0}^{t}f_{x}(s,x_{i}^{N}(s))d\beta_{i}(s):=M_{f}(t)
\]
Since $\mathbb{E}_{N}[M_{f}^{2}(t)]=O(N^{-1})$, the expression \eqref{e344}
converges to $0$ in probability as $N\rightarrow\infty$. This completes
the proof. 
\end{proof}

\begin{proof}[Proof of Theorem \ref{t1}]
 We start by applying Ito's formula to the density field of $\left\{ z_{i}^{N}(\cdot)\right\} _{i\in\mathcal{T}_{1}^{N}}$.
For $f\in C^{1,2}([0,T]\times\mathbb{T})$, we can write 
\begin{equation}
\frac{1}{N}\sum_{i\in\mathcal{T}_{1}^{N}}\left[f(t,z_{i}^{N}(t))-f(0,\,z_{i}^{N}(0))\right]=J_{1}(t)+J_{2}(t)+J_{3}(t)\label{e321}
\end{equation}
where
\begin{align*}
J_{1}(t) & =\frac{1}{N}\sum_{i\in\mathcal{T}_{1}^{N}}\int_{0}^{t}f_{t}(s,z_{i}^{N}(s))ds\;,\;\;J_{2}(t)=\frac{1}{2N}\sum_{i\in\mathcal{T}_{1}^{N}}\int_{0}^{t}f_{xx}(s,z_{i}^{N}(s))d\left\langle z_{i}^{N},z_{i}^{N}\right\rangle _{s}\;,\\
J_{3}(t) & =\frac{1}{N}\sum_{i\in\mathcal{T}_{1}^{N}}\int_{0}^{t}f_{x}(s,z_{i}^{N}(s))dz_{i}^{N}(s)\;.
\end{align*}
We first demonstrate that the martingale $J_{3}(t)$ is negligible.
By (\ref{e311}), (\ref{e312}) and \eqref{ep3}, 
\begin{equation}
\mathbb{E}_{N}[J_{3}^{2}(t)]\le CN^{-1}t+CN^{-1}\mathbb{E}_{N}\bigg[N^{-2}\sum_{i\neq j}A_{ij}^{N}(t)\bigg]=O(N^{-1})\;.\label{e322}
\end{equation}
Thus, $J_{3}(t)$ is negligible by Doob's inequality.

We now consider the quadratic variation part in $J_{2}(t)$. By (\ref{e312})
and Theorem \ref{t2}, we are able to approximate $d\left\langle z_{i}^{N},z_{i}^{N}\right\rangle _{s}$
by 
\begin{equation}
\lambda\alpha^{2}\sigma_{c(i)}^{2}\left[\lambda+\frac{\mu_{1}^{N}(s)*\iota_{\epsilon}(x_{i}^{N}(s))}{\sigma_{1}^{2}}+\frac{\mu_{2}^{N}(s)*\iota_{\epsilon}(x_{i}^{N}(s))}{\sigma_{2}^{2}}\right]ds\;.\label{e324}
\end{equation}
For $\rho_{1},\,\rho_{2}\in L^{2}(\mathbb{T})$, define 
\[
F(t,x)=F_{\rho_{1},\rho_{2}}(t,x)=x+\alpha\int_{\mathbb{T}}\nu(y-x)\left(\rho_{1}/\sigma_{1}^{2}+\rho_{2}/\sigma_{2}^{2}\right)(t,y)dy\;.
\]
Note that we can rewrite (\ref{e310}) as 
\begin{equation}
z_{i}^{N}(t)=x_{i}^{N}(t)+(\alpha/\sigma_{1}^{2})\left\langle \mu_{1}^{N}(t),\nu(\cdot-x_{i}^{N}(t))\right\rangle +(\alpha/\sigma_{2}^{2})\left\langle \mu_{2}^{N}(t),\nu(\cdot-x_{i}^{N}(t))\right\rangle \;.\label{e325}
\end{equation}
If we replace $\mu_{c}^{N}(t)$, $c\in\{1,\,2\}$, with $\rho_{c}(t,x)dx$,
then the right hand side becomes $F(t,x_{i}^{N}(t))$. By combining
this observation, (\ref{e321}), (\ref{e322}), (\ref{e324}), (\ref{e325})
and Proposition \ref{p38}, we can conclude that, for any $\delta>0$,
\begin{align}
\limsup_{\epsilon\rightarrow0}\mathbb{Q}_{\infty} & \bigg[\bigg|(\rho_{1}(\cdot,x)dx,\,\rho_{2}(\cdot,x)dx)^{\dagger}:\nonumber \\
 & \int_{\mathbb{T}}f(T,\,F(T,x))\rho_{1}(T,x)dx-\int_{\mathbb{T}}f(0,F(0,x))\rho_{1}(0,x)dx\nonumber \\
 & -\int_{0}^{T}\int_{\mathbb{T}}\bigg[f_{t}+\frac{\lambda\alpha^{2}\sigma_{1}^{2}}{2}\bigg(\lambda+\frac{\rho_{1}*\iota_{\epsilon}}{\sigma_{1}^{2}}+\frac{\rho_{2}*\iota_{\epsilon}}{\sigma_{2}^{2}}\bigg)f_{xx}\bigg]\rho_{1}(s,x)dxds\bigg|>\delta\bigg]=0\;,\label{e326}
\end{align}
where $f_{t}$ and $f_{xx}$ are evaluated at $(s,F(s,x))$, while
$\rho_{c}*\iota_{\epsilon}=\rho_{c}(s,\cdot)*\iota_{\epsilon}$ is
evaluated at $(s,x)$. By Proposition \ref{p38}, we know that $\rho_{c}*\iota_{\epsilon}$
converges to $\rho_{c}$ in $L^{2}$ as $\epsilon\rightarrow0$ and
therefore, we obtain from (\ref{e326}) that 
\begin{align}
\mathbb{Q}_{\infty}\bigg[ & (\rho_{1}(\cdot,x)dx,\,\rho_{2}(\cdot,x)dx)^{\dagger}:\nonumber \\
 & \int_{\mathbb{T}}f(T,\,F(T,x))\rho_{1}(T,x)dx-\int_{\mathbb{T}}(0,F(0,x))\rho_{1}(0,x)dx\nonumber \\
 & -\int_{0}^{T}\int_{\mathbb{T}}\bigg[f_{t}+\frac{\lambda\alpha^{2}\sigma_{1}^{2}}{2}\bigg(\lambda+\frac{\rho_{1}}{\sigma_{1}^{2}}+\frac{\rho_{2}}{\sigma_{2}^{2}}\bigg)f_{xx}\bigg]\rho_{1}dxds=0\bigg]=0\;.\label{e327}
\end{align}
As before, $f_{t}$ and $f_{xx}$ are evaluated at $(s,F(s,x))$,
and $\rho_{1}$ and $\rho_{2}$ are evaluated at $(s,x)$. Now we
wish to replace $f(t,F(t,x))$ with $g(t,x)\in C^{1,2}([0,\,T]\times\mathbb{T})$
to complete the derivation of equation \eqref{em1}. To this end,
first observe that, for any $h\in L^{1}(\mathbb{T})$, 
\[
\frac{d}{dx}\left[\int_{\mathbb{T}}\nu(y-x)h(y)dy\right]=h(x)-\int_{\mathbb{T}}h(y)dy\;.
\]
Hence, we can write $F_{x}(t,x)=\alpha U(t,x)$ where 
\[
U(t,x)=\lambda+\rho_{1}(t,x)/\sigma_{1}^{2}+\rho_{2}(t,x)/\sigma_{2}^{2}>\lambda>0\;.
\]
Thus, there exists the inverse function $G(t,\cdot)$ of $F(t,\cdot)$
so that $F(t,G(t,x))=G(t,F(t,x))=x$ for all $t$ and $x$. Now we
can insert $f(t,x)=g(t,G(t,x))$ which implies $f(t,F(t,x))=g(t,x)$. 

At this moment, we need that $g\in C^{1,2}$, while our computations
leading to (\ref{e327}) requires $f\in C^{1,2}$, and therefore we
should have $f,\,g\in C^{1,2}$ simultaneously. This is guaranteed
if $\rho_{1},\,\rho_{2}\in C^{1,2}$. Hence, we first assume that
$\rho_{1},\,\rho_{2}$ are smooth $x$. For this case, we can check
$F_{xx}=\alpha U_{x}$ and $F_{t}=\frac{\alpha}{2}\rho_{x}$ where
the latter follows from Lemma \ref{lem1}. Hence, by elementary computations
we can check 
\[
G_{x}(t,x)=\frac{1}{\alpha U(t,F(t,x))}\;,\;\;G_{xx}(t,x)=-\frac{U_{x}(t,F(t,x))}{\alpha^{2}U^{3}(t,F(t,x))}\mbox{\;and}\;G_{t}(t,x)=-\frac{\rho_{x}(t,F(t,x))}{2U(t,F(t,x))}\;.
\]
Finally, substitute $f(t,x)$ in (\ref{e327}) by $g(t,G(t,\,x))$
to obtain 
\begin{align}
\mathbb{Q}_{\infty}\bigg[ & (\rho_{1}(\cdot,x)dx,\,\rho_{2}(\cdot,x)dx)^{\dagger}:\nonumber \\
 & \int_{\mathbb{T}}g(T,x)\rho_{1}(T,x)dx-\int_{\mathbb{T}}f(0,x)\rho_{1}(0,\,x)dx\nonumber \\
 & -\int_{0}^{T}\int_{\mathbb{T}}\bigg[g_{t}-\bigg(\frac{\rho_{x}}{2U}+\frac{\lambda\sigma_{1}^{2}U_{x}}{2U^{2}}\bigg)g_{x}+\frac{\lambda\sigma_{1}^{2}}{2U}g_{xx}\bigg]\rho_{1}(s,x)dxds=0\bigg]=1.\label{e328}
\end{align}
This completes the proof when $\rho_{1},\,\rho_{2}$ are smooth. For
the general case, we mollify $\rho_{1},\,\rho_{2}$ by $\rho_{1}*\phi_{\eta}$,
$\rho_{2}*\phi_{\eta}$ with the smooth mollifier $\{\phi_{\eta}\}_{\eta>0}$
and then apply the argument above to $\rho_{1}*\phi_{\eta}$, $\rho_{2}*\phi_{\eta}$.
Ultimately, we allow $\eta\rightarrow0$ to obtain (\ref{e328}).
The precise argument of this procedure can be found in \cite[Lemma 4.12]{S}. 

In particular, (\ref{e328}) proves the first coordinate of \eqref{em1},
i.e., the equation for $\rho_{1}$. The equation for $\rho_{2}$ can
be proven in an identical manner. Therefore, the identification of
limit points as the weak solution of \eqref{em1} with initial condition
$\widetilde{\rho}^{\,0}(x)$ is completed. 

We postpone the discussion of the uniqueness of weak solution to the
next section. 
\end{proof}

\section{Uniqueness\label{s4}}

\subsection{Revisit: two-color system\label{s41}}

We start by considering the two-color system. At the time when this
paper was written, there existed three limit theorems for the two-color
system: SSEP \cite{Q}, ZRP \cite{GJL} and LIBM \cite{S}. These
three models share the same form of hydrodynamic limit. The evolution
of the limiting particle densities $\rho_{1},\,\rho_{2}$ of the two
colors are obtained as the solution of the following PDE:
\begin{equation}
\frac{\partial}{\partial t}\begin{pmatrix}\rho_{1}\\
\rho_{2}
\end{pmatrix}=\frac{1}{2}\nabla\cdot\left[\begin{pmatrix}\frac{\rho_{1}}{\rho}D(\rho)+\frac{\rho_{2}}{\rho}S(\rho) & \frac{\rho_{1}}{\rho}(D(\rho)-S(\rho))\\
\frac{\rho_{2}}{\rho}(D(\rho)-S(\rho)) & \frac{\rho_{2}}{\rho}D(\rho)+\frac{\rho_{1}}{\rho}S(\rho)
\end{pmatrix}\nabla\begin{pmatrix}\rho_{1}\\
\rho_{2}
\end{pmatrix}\right]\;.\label{e41}
\end{equation}
We briefly explain this equation:
\begin{itemize}
\item $\rho=\rho_{1}+\rho_{2}$ is the uncolored limiting density
\item $D(\rho)$ is the bulk-diffusion coefficient, $i.e.,$ $\rho$ is
the solution of
\begin{equation}
\frac{\partial\rho}{\partial t}=\frac{1}{2}\nabla\cdot[D(\rho)\nabla\rho].\label{e42}
\end{equation}
We can also derive this equation from (\ref{e41}) by simply adding
two equations in (\ref{e41}). For instance, for the SSEP and the
LIBM, $D(\rho)\equiv1$, so that equation \eqref{e42} becomes the
heat equation. That is because the nature of interaction is reflection.
For the ZRP, $D(\rho)$ is not a constant function and we refer to
\cite[Section 5]{KL} for details. 
\item $S(\rho)$ is the self-diffusion coefficient in the equilibrium with
density $\rho$. The closed form of the self-diffusion coefficient
is known only for LIBM \cite{G} and ZRP \cite{GJL}. In particular,
for the LIBM, $S(\rho)=\frac{\lambda}{\lambda+\rho}$ where $\lambda$
is the interaction parameter. The closed form is not known for the
SSEP but the regularity of $S(\cdot)$ has been established in \cite{LOV}. 
\end{itemize}
This kind of universality is an interesting feature of the theory
of interacting particle systems. In particular, we can derive the
non-equilibrium behavior of the tagged particles, the so-called propagation
of chaos \cite{R}, from the limit theorem for two-color system. We
emphasize here that the uniqueness of equation (\ref{e41}) is not
a significant issue. This becomes obvious when we substitute $\rho_{2}=\rho-\rho_{1}$
in the first equation of (\ref{e41}) to obtain
\begin{equation}
\frac{\partial\rho_{1}}{\partial t}=\frac{1}{2}\nabla\cdot\left[S(\rho)\nabla\rho_{1}+\frac{(D(\rho)-S(\rho))\nabla\rho}{\rho}\rho_{1}\right].\label{e43}
\end{equation}
Given that $\rho$ is the solution of the master equation (\ref{e42}),
we can simply regard (\ref{e43}) as a linear parabolic PDE. Thus,
under appropriate initial conditions and the non-degeneracy of $S(\cdot)$,
the uniqueness is automatically guaranteed. 
\begin{rem}
We can observe an interesting property of the particle system from
(\ref{e42}) and (\ref{e43}); \textit{the bulk evolution of particles
of specific color is not governed by the bulk-diffusion coefficient
$D(\cdot)$ but by the self-diffusion coefficient $S(\cdot)$ only. }
\end{rem}

\begin{rem}
An $m$-color system with $m\ge3$ produce exactly the same result;
(\ref{e43}) does not depend on the number of colors we used. 
\end{rem}
Now we consider  equation (\ref{em1}) for the two-component system.
In this case, due to the inhomogeneity of diffusivity of particles,
the master equation (\ref{e319}), which corresponds to \eqref{e42}
for homogeneous system, cannot be solved by itself. Accordingly, an
iterative strategy for solving the two-color system is unavailable
and instead, we have to confront the system of equation (\ref{em1})
in a direct manner. Remark that the quasi-linear parabolic equation
of the form (\ref{em1}) is known as a \textit{cross-diffusion equation. }

\subsection{Uniqueness }

The general theory for a cross-diffusion equation was thoroughly explained
in \cite{A}. In this section, we use this general theory to develop
the local uniqueness of (\ref{em1}).

Let us consider an $n$-dimensional quasi-linear equation 
\begin{equation}
\partial_{t}\mathbf{u}=\mbox{div}\cdot\left[A(\mathbf{u})\nabla\mathbf{u}\right]\label{e44}
\end{equation}
where $\mathbf{u}(\cdot)$ is an $n$-dimensional vector function
and $A(\cdot)$ is an $n\times n$ matrix functional. In most physical
situations of multi-component diffusive flow (\textit{e.g.,} \cite{A,BGG,G,JS,SKT}
and models therein) the diffusion matrix $A(\mathbf{u})$ is neither
symmetric nor positive semi-definite. Instead, it becomes evident
that the physically relevant condition for $A(\mathbf{u})$ is the
\textit{normal ellipticity}. More precisely, a square matrix $M$
is called normally elliptic\textit{ }if all of its eigenvalues have
a positive real part. Equation (\ref{e44}) is considered normally
elliptic if $A(\mathbf{u})$ is normally elliptic for all $\mathbf{u}$.
The normally elliptic parabolic equations differ intrinsically from
the uniformly elliptic one. In particular, in \cite{SJ}, examples
of normally elliptic equations without the maximum principle or even
worse than that, equations that \textit{blow up} in finite time, were
suggested. In experimental physics (cf. \cite{DT}), this behavior
of multi-component system has also been verified. 

The normally elliptic equation has been analyzed in \cite{A}, in
which the local existence and the uniqueness of solution were obtained.
In particular, the following theorem is a direct consequence of \cite[Theorems 14.4, 14.6 and 15.1]{A}.
\begin{thm}
\label{t3}Suppose that a function $\widetilde{\rho}^{\,0}$ satisfies
Assumption \ref{a1}. Then, there exists $t^{+}(\widetilde{\rho}^{\,0})>0$
such that equation (\ref{em1}) has a unique weak non-negative solution
$(\rho_{1},\rho_{2})^{\dagger}$ provided that $T<t^{+}(\widetilde{\rho}^{\,0})$. \end{thm}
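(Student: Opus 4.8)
The plan is to recognize equation (\ref{em1}) as a special case of the abstract quasilinear parabolic system (\ref{e44}) and then to verify, one by one, the structural hypotheses under which the local theory of \cite{A} applies. Setting $\mathbf{u}=(\rho_1,\rho_2)^{\dagger}$ and $A(\mathbf{u})=\tfrac12 D(\rho_1,\rho_2)$ with $D$ as in (\ref{em2}), the equation is exactly of the form $\partial_t\mathbf{u}=\mathrm{div}\cdot[A(\mathbf{u})\nabla\mathbf{u}]$. Since the underlying domain is the torus $\mathbb{T}$, there is no spatial boundary and hence no boundary or compatibility conditions to check; the only nontrivial inputs to \cite[Theorems 14.4, 14.6 and 15.1]{A} are (i) the smooth dependence of $A$ on $\mathbf{u}$, (ii) the \emph{normal ellipticity} of $A$, and (iii) the membership of the initial datum in the correct interpolation class. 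I expect steps (i) and (iii) to be routine and step (ii) to carry the essential content.

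For the smoothness in (i), I would note that the denominator $d(\rho_1,\rho_2)=\lambda+\rho_1/\sigma_1^2+\rho_2/\sigma_2^2$ appearing in (\ref{em2}) is bounded below by $\lambda>0$ on the closed quadrant $\{\rho_1\ge0,\,\rho_2\ge0\}$ and remains strictly positive on an open neighborhood of it. On this open set $D$, and hence $A$, is a real-analytic (in particular smooth and locally Lipschitz) matrix-valued function of $\mathbf{u}$, as required. For the normal ellipticity in (ii), let $M$ denote the matrix in (\ref{em2}). A short computation gives $\det M=\lambda(\sigma_2^2\rho_1+\sigma_1^2\rho_2+\lambda\sigma_1^2\sigma_2^2)>0$ and $\mathrm{tr}\,M=\rho_1+\rho_2+\lambda(\sigma_1^2+\sigma_2^2)>0$ whenever $\rho_1,\rho_2\ge0$. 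Since $D=M/d$ with $d>0$, the eigenvalues of $D$ are those of $M$ divided by $d$, so $\det D>0$ and $\mathrm{tr}\,D>0$. Consequently the two eigenvalues are either real and both positive, or a complex-conjugate pair with real part $\tfrac12\mathrm{tr}\,D>0$; in either case every eigenvalue of $A=\tfrac12 D$ has strictly positive real part, uniformly on compact subsets of the physical quadrant. This is precisely the ellipticity hypothesis of \cite{A}.

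Granting (i)--(iii), Assumption \ref{a1} places $\widetilde{\rho}^{\,0}$ in $W^{1,p}(\mathbb{T})^2$ with $p>2$, which lies in the trace/interpolation space needed to initialize the abstract evolution equation; \cite[Theorems 14.4 and 14.6]{A} then furnish a unique maximal weak solution on a maximal existence interval, and \cite[Theorem 15.1]{A} provides the continuity and function-space properties of this solution. Defining $t^{+}(\widetilde{\rho}^{\,0})$ to be the maximal existence time, uniqueness holds for every $T<t^{+}(\widetilde{\rho}^{\,0})$.

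The one genuinely delicate point, which I would flag as the main obstacle, is the \emph{non-negativity} of the solution: cross-diffusion systems of the form (\ref{em1}) need not obey a scalar maximum principle, so positivity cannot be read off directly from the ellipticity. I would obtain it either from an invariant-region argument exploiting the sign structure of $D$ along the faces $\{\rho_1=0\}$ and $\{\rho_2=0\}$, or, more cleanly, by observing that the limit points of $\{\mathbb{Q}_N\}$ produced in Theorem \ref{t1} are supported on non-negative density trajectories that are weak solutions of (\ref{em1}); the uniqueness just established then identifies the solution constructed above with such a limit point, forcing it to be non-negative.
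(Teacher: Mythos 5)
Your proposal follows essentially the same route as the paper's proof: both reduce (\ref{em1}) to the quasilinear system (\ref{e44}), verify the normal ellipticity of $D(\rho_1,\rho_2)$ by observing that its trace and determinant are positive (you carry out explicitly the computation the paper calls ``obvious''), and invoke Assumption \ref{a1} to meet the $W^{1,p}$, $p>2$, hypothesis of \cite[Theorems 14.4, 14.6 and 15.1]{A}. Your additional remark on non-negativity --- resolved by identifying the unique Amann solution with the non-negative limit trajectories from Theorem \ref{t1} --- addresses a point the paper's proof passes over silently, and is a sound way to close it.
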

\begin{proof}
The normal ellipticity of $D(\rho_{1},\rho_{2})$ of (\ref{em2})
is obvious since both of its trace and determinant are positive. We
note here that the condition $\widetilde{\rho}^{\,0}\in W_{1,p}(\mathbb{T})^{2}$
for some $p>2$ of Assumption \ref{a1} is used here for satisfying
the requirement of \cite[Theorem 14.4]{A}. 
\end{proof}
This theorem proves part (1) of Theorem \ref{t1}. As we can anticipate
from the peculiar behavior of the solution of certain examples in
\cite{SJ}, the analysis of equation (\ref{e44}) is more delicate
than the analysis of usual parabolic equations. In particular, the
general theory established in \cite[Theorem 14.4]{A} requires a priori
bound of $\sup_{t\ge0}||\widetilde{\rho}(t,\,\cdot)||_{W_{k,p}}$
for some $k$ and $p$ to achieve the global uniqueness result. This
bound for our model would be very difficult to obtain. We also stress
here that the reference \cite{SJ} demonstrate a counter-example for
which the global uniqueness does not hold.

\subsection{The Maxwell--Stefan equation\label{s43}}

Equation (\ref{em1}) is not only an normally elliptic equation but
also has some underlying physical structures, which may hopefully
be exploited to develop a more concrete result than what we obtained
in the previous section. In fact, in addition to the normal ellipticity,
we observed that (\ref{em1}) is equivalent to the well-known Maxwell--Stefan
equation \cite{M}. We introduce the Maxwell--Stefan equation and
refer to \cite{Bot,JS,J} for detailed exposition of this equation.
In particular, \cite{Bot} contains the physical derivation of the
equation. Our outline follows mostly that of \cite{JS}. 

In principle, the Maxwell--Stefan equation describes the diffusive
behavior of multi-component gaseous mixtures. Consider a system consisting
of $n$ components and $u_{i}$, $1\le i\le n$, denotes the molar
concentration of the $i$th component. We assume $\sum_{i=1}^{n}u_{i}\equiv1$
so that the dimension of equation is $n-1$. Denote by $J_{i}$ the
flux of the $i$th component, and assume $\sum_{i=1}^{n}J_{i}\equiv0$.
Finally, $D_{ij}=D_{ji}>0$, $i\neq j$, represents the constant binary
diffusion coefficient between $i$th and $j$th components. Then the
Maxwell--Stefan equation is given by 
\begin{align}
\begin{cases}
\frac{\partial u_{i}}{\partial t}=-\nabla\cdot J_{i}\\
\nabla u_{i}=-\sum_{j:j\neq i}\frac{u_{j}J_{i}-u_{i}J_{j}}{D_{ij}}
\end{cases} & \;;\;i=1,\,2,\,\cdots,\,n\;.\label{e46}
\end{align}
Note that, at least in principle, we can recover $(J_{1},\cdots,J_{n})$
as a function of $(u_{1},\cdots,u_{n})$ from the second equation,
and by inserting this result into the first equation, we can derive
an equation for $(u_{1},\cdots,u_{n})$. This procedure can be explicitly
carried out for the ternary system, i.e., $n=3$. For this case, by
the elementary computation that we explained previously, it can be
shown that $\mathbf{u=}(u_{1},u_{2})$ (recall that $u_{3}=1-u_{1}-u_{2}$)
satisfies \eqref{e44} with the cross diffusion matrix 
\begin{equation}
A(u_{1},u_{2})=\frac{1}{f(u_{1},u_{2})}\begin{pmatrix}D_{23}+(D_{12}-D_{23})u_{1} & (D_{12}-D_{13})u_{1}\\
(D_{12}-D_{23})u_{2} & D_{13}+(D_{12}-D_{13})u_{2}
\end{pmatrix}\label{e48}
\end{equation}
where 
\begin{equation}
f(u_{1},u_{2})=D_{13}D_{23}+D_{13}(D_{12}-D_{23})u_{1}+D_{23}(D_{12}-D_{13})u_{2}.\label{e49}
\end{equation}
This kind of simple derivation procedure is invalid for $n\ge4$,
see \cite[Section 2]{J} for details. 

The hydrodynamic limit equation \eqref{em1} is equivalent to the
ternary Maxwell--Stefan equation described previously under the condition
that $D_{12}>D_{13},\,D_{23}$. To this end, first, for the given
ternary Maxwell--Stefan equation, let $k>0$ be an arbitrary real
number and let 
\begin{eqnarray}
 &  & \rho_{1}=kD_{13}(D_{12}-D_{23})u_{1}\;,\;\;\rho_{2}=kD_{23}(D_{12}-D_{13})u_{2}\;,\nonumber \\
 &  & \quad\sigma_{1}^{2}=D_{13}^{-1}\;,\;\;\sigma_{2}^{2}=D_{23}^{-1}\;,\;\mbox{and}\;\;\lambda=kD_{13}D_{23}\;\label{e410}
\end{eqnarray}
Then, $\rho_{1}$ and $\rho_{2}$ can be easily verified to satisfy
equation \eqref{em1}. On the other hand, for the given hydrodynamic
limit equation \eqref{em1}, let $D_{12}$ be any number larger than
$\max\{\sigma_{1}^{-2},\sigma_{2}^{-2}\}$, and let 
\begin{equation}
u_{1}=\left[\lambda(\sigma_{2}^{2}D_{12}-1)\right]^{-1}\rho_{1}\;,\;\;u_{2}=\left[\lambda(\sigma_{1}^{2}D_{12}-1)\right]^{-1}\rho_{2}\;,\;\;D_{13}=\sigma_{1}^{-2}\;,\;\mbox{and\;\;}D_{23}=\sigma_{2}^{-2}.\label{e411}
\end{equation}
Then, $u_{1}$ and $u_{2}$ can be observed to satisfy the ternary
Maxwell--Stefan equation. Owing to the multi-component nature of our
model, this equivalence is quite natural. Hence, we can reduce the
uniqueness problem of equation \eqref{em1} to that of Maxwell--Stefan
equation. 

An important feature of the Maxwell--Stefan equation is its \textit{entropy
structure} as a consequence of the Onsager reciprocity. More precisely,
the diffusion matrix \eqref{e48} can be written as $A(\cdot,\cdot)=K(\cdot,\cdot)\chi(\cdot,\cdot)$
where $\chi$ is the Hessian of entropy functional, and $K$ is a
positive-definite and symmetric matrix. The normal ellipticity of
the diffusion matrix naturally follows from this structure. Recently,
the cross-diffusion equations under the presence of the entropy structure
have been investigated by several articles. For instance, the global
existence of the weak solution and its exponential decay to the steady
state is proven in \cite{JS}, and the boundedness of this global
solution is established in \cite{J}. However, the global uniqueness
of with a general initial condition is known to be a delicate problem
(cf. \cite[Section 6]{J}), and the global uniqueness of the Maxwell--Stefan
equation for a general class of initial conditions is an open problem.
At the time when this paper is written, the global uniqueness of the
Maxwell--Stefan equation is known only for the near-equilibrium case
by \cite{Kaw}. In our context, this result can be stated in the following
manner.
\begin{thm}
\label{t4}Under Assumption \ref{a2}, there exists a unique global
weak solution of (\ref{em1}).
\end{thm}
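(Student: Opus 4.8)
The plan is to exploit the equivalence between equation (\ref{em1}) and the ternary Maxwell--Stefan equation recorded in (\ref{e410})--(\ref{e411}), so as to transport the near-equilibrium global uniqueness result of \cite{Kaw} into our setting. Concretely, fix $D_{13}=\sigma_1^{-2}$, $D_{23}=\sigma_2^{-2}$ and any $D_{12}>\max\{\sigma_1^{-2},\sigma_2^{-2}\}$ large; then the substitution (\ref{e411}) defines a linear map $(\rho_1,\rho_2)\mapsto(u_1,u_2)$ with positive coefficients. Since this map is a constant-coefficient linear isomorphism of $\mathbb{R}^2$, it is simultaneously a bounded isomorphism of $H_2(\mathbb{T})^2$ and it intertwines the weak formulations: $(\rho_1,\rho_2)$ is a weak solution of (\ref{em1}) if and only if the corresponding $(u_1,u_2)$ is a weak solution of the Maxwell--Stefan system (\ref{e46}) with diffusion matrix (\ref{e48}). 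Thus the uniqueness question for (\ref{em1}) is equivalent to that for the Maxwell--Stefan equation, and it suffices to verify the latter.

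Next I would identify the equilibrium and translate the smallness condition. The constant state $\rho_c\equiv\overline{\rho}_c$ (with $\overline{\rho}_1,\overline{\rho}_2>0$, both types present) is a stationary solution of (\ref{em1}); under (\ref{e411}) it corresponds to a constant state $(\overline{u}_1,\overline{u}_2)$ of the Maxwell--Stefan system, which lies in the physical simplex $\{u_1,u_2>0,\;u_1+u_2<1\}$ provided $D_{12}$ is taken large enough. Because the linear map and its inverse are bounded on $H_2(\mathbb{T})$, the distance $\|u-\overline{u}\|_{H_2}$ is controlled by a fixed multiple of $\|\rho-\overline{\rho}\|_{H_2}$. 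Hence, choosing the constant $\epsilon=\epsilon(\lambda,\sigma_1,\sigma_2)$ in Assumption \ref{a2} small enough that this fixed multiple of $\epsilon$ falls below the smallness threshold required by \cite{Kaw}, I place the transported initial datum $(u_1^0,u_2^0)$ in the near-equilibrium regime of that reference.

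With this setup, \cite{Kaw} furnishes a unique global-in-time solution of the Maxwell--Stefan equation in the relevant regularity class near $(\overline{u}_1,\overline{u}_2)$; the underlying mechanism is the entropy/Onsager structure $A=K\chi$ emphasized above, which symmetrizes the system and yields the energy estimates keeping the solution inside the region of normal ellipticity for all time, precluding the finite-time blow-up of the examples in \cite{SJ}. Transporting this solution back by the inverse of (\ref{e411}) produces a unique global solution of (\ref{em1}). I also note that Assumption \ref{a2} implies Assumption \ref{a1}, since $H_2(\mathbb{T})\hookrightarrow W^{1,p}(\mathbb{T})$ for every $p$ in one dimension, so the local theory of Theorem \ref{t3} is available and consistent with the global solution just constructed.

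The step I expect to be the main obstacle is reconciling the two notions of solution: Theorem \ref{t1} and Proposition \ref{p38} produce a weak solution only in the energy class $\rho_1,\rho_2\in L^2([0,T]\times\mathbb{T})$ with the entropy bound (\ref{e315}), whereas \cite{Kaw} asserts uniqueness within a more regular near-equilibrium class. To close this gap I would run a weak--strong uniqueness argument: using the entropy structure of the Maxwell--Stefan system, one compares an arbitrary weak solution with the regular solution of \cite{Kaw} through a relative entropy (or modulated energy), derives a Gr\"onwall inequality for it, and concludes that it vanishes identically. Making this rigorous demands enough integrability of the weak solution to justify its use as a test function against the regular solution, and this admissibility matching --- rather than the construction of the regular solution itself --- is the delicate point.
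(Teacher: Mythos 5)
Your proposal follows essentially the same route as the paper: reduce \eqref{em1} to the ternary Maxwell--Stefan equation via the substitution \eqref{e411} and invoke Kawashima's near-equilibrium global uniqueness result \cite{Kaw}, choosing $\epsilon(\lambda,\sigma_{1},\sigma_{2})$ so that the transported initial datum meets that smallness threshold. The paper's proof is exactly this reduction, deferring all details --- including the derivation of $\epsilon$ and the solution-class matching you flag as the delicate point --- to \cite[Section 9.4]{Bot}.
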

The proof along with the non-trivial way to derive the constant $\epsilon(\lambda,\sigma_{1},\sigma_{2})$
is summarized in \cite[Section 9.4]{Bot}. This proves the part (2)
of Theorem \ref{t1}.

\subsection*{Acknowledgement }

The author would like to thank Professor S. R. S. Varadhan for numerous
valuable discussions, and Professor Ansgar J\"{u}ngel of Vienna University
of Technology for many helpful comments regarding the cross-diffusion
equation, especially for the material in Section \ref{s4}. The author
also acknowledges the support from the Samsung Scholarship.

\end{document}